\providecommand{\U}[1]{\protect\rule{.1in}{.1in}}
\providecommand{\U}[1]{\protect\rule{.1in}{.1in}}
\newtheorem{theorem}{Theorem}
\newtheorem{example}{Example}
\newtheorem{lemma}[theorem]{Lemma}
\newtheorem{remark}[theorem]{Remark}
\newenvironment{proof}[1][Proof]{\noindent\textbf{#1.} }{\ \rule{0.5em}{0.5em}}
\DeclareMathOperator\erf{erf}
\begin{document}

\title{$ hp $-version collocation method for a class of nonlinear Volterra integral equations of the first kind}
\author{Khadijeh Nedaiasl\thanks{Institute for Advanced Studies in Basic Sciences,
Zanjan, Iran, e-mail: \texttt{nedaiasl@iasbs.ac.ir  \&   knedaiasl85@gmail.com.} }
\and Raziyeh Dehbozorgi\thanks{School of Mathematics, Iran University of Science and Technology, Tehran 16844,   Iran, e-mail:
\texttt{r.dehbozorgi2012@gmail.com.} }
\and Khosrow Maleknejad \thanks{School of Mathematics, Iran University of Science and Technology, Tehran 16844,   Iran,  e-mail:\texttt{maleknejad@iust.ac.ir.}}}
\maketitle
\begin{abstract}
In this paper, we present a collocation method for nonlinear Volterra integral equation of the first kind. This method benefits from the idea of  $hp$-version projection methods. We provide an approximation based on the Legendre polynomial interpolation. The convergence of the proposed  method is completely studied and an error estimate under the $L^2$-norm is provided.  
Finally, several numerical experiments are presented in order to verify the obtained theoretical results.
\end{abstract}
\vspace{1em} \noindent\textbf{Keywords:} {nonlinear operator, first kind Volterra integral equation, $hp$-collocation method, error analysis.  }

\vspace{1em} \noindent\textbf{2010 Mathematics Subject Classification: 47H30, 45D05, 65L60, 65L70. }

\section{Introduction}
Volterra integral equations of first kind are important in theory and  application, for example solving an exterior homogeneous wave equation with Dirichlet boundary condition leads to a time dependent single layer boundary integral equation which can be seen as a Volterra integral equation of the first kind \cite{MR2875249}. They could be categorized according to their kernels in two types. The first equations are those with well-behaved kernels and the second types have unbounded kernels at $ s=t$, like Abel's integral equation \cite{linz}. 

This paper deals with the numerical solution of nonlinear Volterra integral equation of first kind
\begin{equation}\label{asli}
\mathcal{K}u(t)	:=\int_{0}^{t} \kappa(s,t) \psi(s,u(s))\mathrm{d}s=f({t}), \quad  0\leq t\leq T < \infty.
\end{equation}
A general form of Eq. \eqref{asli} can be expressed as follows:
\begin{equation}\mathcal{K}u(t):=\int_{0}^{t} \kappa(s,t,u(s))\mathrm{d}s=f(t), \quad  t \in I:=[0,T].\end{equation}

 A lot of theoretical and numerical researches  have been devoted to the second kind Volterra integral equations. The theoretical study of them  is given in \cite{MR1050319} and a comprehensive numerical investigation based on collocation method is presented  in  \cite{MR2128285}.
 But a few numbers of them deal with the numerical solution of the first kind integral equations, especially the nonlinear integral equations. 

Eggermont has studied the numerical solution of the Volterra integral equation of the first kind in a series of papers \cite{egger1, egger2, egger3}. For the linear case, the collocation scheme has been analyzed as a \textit{projection method} using piecewise polynomials \cite{egger1}. In addition, the super-convergence property of the collocation projection method  is studied  in \cite{egger3}. The quadrature method for nonlinear Volterra integral equation has been studied in \cite{egger2} by considering  it as a collocation-projection method. Furthermore, an asymptotic optimal error estimation and a comprehensive study of the zero stability of the method are presented. 

Brunner et al. have given a comprehensive convergence analysis for collocation, (quadrature) discontinuous  Galerkin and full discontinuous Galerkin methods for linear Volterra integral equation of the first kind extensively  \cite{brunner1977, kauthenbrunner, MR2993111}. Furthermore, the global order of convergence for the collocation method in the space of piecewise polynomials of degree $ m\geq 0 $, with jump discontinuities on the set of knots is studied in \cite{MR0483586, MR619921}. 
Recent studies for the linear Abel's integral equations based on 
finite element Galerkin method are analyzed in \cite{eggermontabel, Vögeli2018}. 
In mentioned works, to obtain an efficient approximation, one should increase the number of  mesh points ($ h$-version) or the degree of polynomials in the expansion ($ p$-version). 

In order to have the advantages of both $ h $- and $ p $-versions, it is possible to vary time steps and approximation orders simultaneously  which is called $ hp$-version methods. Kauthen and Brunner by following similar ideas have studied the convergence analysis of two-step collocation method based on a Runge-Kutta approach for the first kind Volterra integral equation \cite{kauthenbrunner}. A multi-step collocation method for the second kind Volterra integral equations and its  linear stability properties   have been studied by Conte and Paternoster \cite{CONTE2009}. Recently, Zhang and Liang \cite{liang2018} have modified these approaches and introduced a type of multi-step collocation method by using the Lagrange polynomials in each subinterval for the first kind linear Volterra equation. In such methods, the resulting system  can be solved efficiently, in addition the flexibility of method makes it more suitable for large $ T $ \cite{MR3434874}. { In almost all of the mentioned works,  the linear Volterra integral equations have been studied and  interesting techniques are proposed for solving them.  In spite of the abundant research for the linear cases of Eq. \eqref{asli} in the literature, few approaches deal with the nonlinear ones.}

Due to the efficiency and accuracy, the $ hp$-version Galerkin and collocation methods have received considerable attentions. For example, the $ hp $-version of discontinuous Galerkin and Petrov-Galerkin have been studied for integro-differential equations of Volterra types, for more details see \cite{mustapha, MR3266490}.
Sheng et al. have introduced a multi-step Legendre-Gauss spectral collocation method and given a full analysis of convergence in $ L^2$-norm  for the nonlinear Volterra integral equations of the second kind \cite{sheng}. This approach has been extended to the Volterra integral and integro-differential equations with vanishing delays \cite{MR3434874, MR3488071}. Locally varying time steps makes these methods popular for investigating the numerical solution of integral equations with weakly singular kernels, this idea is completely studied  in \cite{MR3673690}.

 The  aim of this paper is to analyze 
the $ hp$-version Legendre collocation method for the first kind nonlinear Volterra integral equations. An important aspect of this method is its flexibility with respect to the step-size and the order of polynomials in each sub-interval. As we will see in the numerical experiments, the proposed collocation method works for the long-time integration intervals and  also it gives remarkable results for the approximation of the equations with non-smooth solutions.
The notations used in this paper are borrowed from \cite{guo, sheng}.
	 	
This paper is organized in the following way. In Section \ref{sec 2} we give some regularity results for the nonlinear Volterra integral equation of the first kind. Section \ref{prelim} is devoted to the description of the $ hp$-version collocation method for the first kind nonlinear Volterra integral equation. In Section \ref{error analysis}, an error analysis of the proposed method is provided in $L^2$ spaces. 
Finally, in order to show the applicability and efficiency of the method and compare with other methods, several examples with smooth and non-smooth solutions are illustrated in Section \ref{numerical experiments}.
	 
\section{Well-posedness of the  problem} \label{sec 2}In order to introduce a numerical scheme for the solution of the Eq. \eqref{asli}, knowledge of the smoothness properties of the exact solution is
necessary. The existence of the solution for this kind of equation is investigated by  Banach and Schauder's fixed point theorems, for more detail see \cite{MR2128285, hack} and references therein.
In the terminology of the articles 
\cite{ MR0329290, MR1350718, MR847018,  egger2}, the
regularity properties of the solution have been established under some assumptions on the right hand side function  $ f(t) $, the kernel $ \kappa(s,t) $ and $ \psi(s,u) $ or in more general case on $ \kappa(s,t,u) $ where a unique continuous solution has been obtained.
Those assumptions { show} that the first-kind Volterra integral equation  is converted into a second-kind Volterra integral equation which is well-posed in the case of having smooth kernel and right hand side function \cite{MR2128285}. Here, we attempt to reduce the smoothness conditions and investigate the existence of  solution in the Hilbert spaces. Let  $H^{m}(I)$ denote the space of Lebesgue measurable functions with $m$-th  derivatives  in $L^{2}(I)$.  

\begin{theorem}
	Assume that the Eq. \eqref{asli} satisfies the following assumptions
	\begin{enumerate}[i.]
	\item $f(t)\in H^{^m}(I), ~f(0)=0,$ \label{item1}
	\item $\kappa(s,t) \in C^{^{m}}(I\times I)$ and $ \kappa(t,t)\neq 0$ $\text{for all } t \in I$, \label{item2}
    \item $\psi(s,u)\in H^{m-1}(I\times \mathbb{R})$,\label{item3}
    \item $\inf \Big\{  \vert \frac{\partial \psi }{\partial u}(s,u) \vert \ \big| (s,u) \in I \times \mathbb{R}\Big\} \geq M>0,$ \label{item4}
    \item $\psi (s,u)$ is Lipschitz continuous w.r. to $ u$. \label{item5}
    \end{enumerate}
	Then it has a unique solution $ u \in H^{^{m-1}}(I)$.
\end{theorem}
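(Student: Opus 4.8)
The plan is to reduce \eqref{asli} to an equivalent second-kind Volterra integral equation, establish unique solvability of the latter in $C(I)$ by a contraction argument, and then bootstrap the regularity up to $H^{m-1}(I)$. For the \emph{reduction}: since the left-hand side of \eqref{asli} vanishes at $t=0$ and $f(0)=0$ by \eqref{item1}, differentiating \eqref{asli} in $t$ is equivalent to the original equation. Using the Leibniz rule this yields
\[
\kappa(t,t)\,\psi(t,u(t)) + \int_{0}^{t}\partial_{t}\kappa(s,t)\,\psi(s,u(s))\,\mathrm{d}s = f'(t).
\]
By \eqref{item2}, $t\mapsto\kappa(t,t)$ is continuous and nonvanishing on the compact set $I$, hence bounded below in modulus by some $c_{0}>0$, so we may divide by it; and by \eqref{item4} the map $u\mapsto\psi(t,u)$ is strictly increasing with derivative $\ge M$, hence a bijection of $\mathbb{R}$ onto $\mathbb{R}$ whose inverse $\Phi(t,\cdot):=\psi(t,\cdot)^{-1}$ is $1/M$-Lipschitz in its second argument. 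Consequently \eqref{asli} is equivalent to
\[
u(t)=\Phi\!\left(t,\ \kappa(t,t)^{-1}\Bigl(f'(t)-\int_{0}^{t}\partial_{t}\kappa(s,t)\,\psi(s,u(s))\,\mathrm{d}s\Bigr)\right)=:(\mathcal{T}u)(t).
\]

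For \emph{existence and uniqueness in $C(I)$}, I would show that $\mathcal{T}$ is a contraction on $C(I)$ endowed with a weighted (Bielecki-type) norm $\|v\|_{\lambda}=\sup_{t\in I}e^{-\lambda t}|v(t)|$. Combining the $1/M$-Lipschitz bound on $\Phi(t,\cdot)$, the Lipschitz assumption \eqref{item5} on $\psi$, the boundedness of $\kappa(t,t)^{-1}$ and of $\partial_{t}\kappa$ on $I\times I$, one gets $|(\mathcal{T}v_{1})(t)-(\mathcal{T}v_{2})(t)|\le C\int_{0}^{t}|v_{1}(s)-v_{2}(s)|\,\mathrm{d}s$ with $C$ depending only on the data; choosing $\lambda$ large makes $\mathcal{T}$ a contraction in $\|\cdot\|_{\lambda}$, and Banach's fixed-point theorem gives a unique $u\in C(I)$. (This is just the classical unique solvability of nonlinear second-kind Volterra equations with a Lipschitz nonlinearity, valid on all of $I=[0,T]$.)

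For the \emph{regularity bootstrap}, starting from $u\in C(I)\subset L^{2}(I)=H^{0}(I)$, I claim that $u\in H^{j}(I)$ with $0\le j\le m-2$ forces $u\in H^{j+1}(I)$; iterating finitely many times yields $u\in H^{m-1}(I)$. The mechanism: by \eqref{item3} (restricted to the bounded range of $u$), the chain rule, the algebra property of $H^{k}(I)$ for $k\ge1$, and the one-dimensional embedding $H^{1}(I)\hookrightarrow C(\overline{I})$, the composition $s\mapsto\psi(s,u(s))$ lies in $H^{\min(j,m-1)}(I)$; the Volterra operator $v\mapsto\int_{0}^{t}\partial_{t}\kappa(s,t)v(s)\,\mathrm{d}s$ gains one derivative (its $t$-derivative equals the boundary term $\partial_{t}\kappa(t,t)v(t)$ plus a Volterra term of the same type), while $\partial_{t}\kappa\in C^{m-1}(I\times I)$ by \eqref{item2} and $f'\in H^{m-1}(I)$ by \eqref{item1}; hence the argument of $\Phi$ in the fixed-point equation belongs to $H^{\min(j+1,m-1)}(I)$. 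Finally, via the inverse function theorem and the lower bound \eqref{item4}, $\Phi(t,\cdot)$ inherits enough smoothness from $\psi$ that $t\mapsto(\mathcal{T}u)(t)$ lies in $H^{\min(j+1,m-1)}(I)$, which closes the induction.

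I expect the last step to be the main obstacle: making the composition and inversion estimates rigorous in the Sobolev scale — controlling the mixed dependence of the right-hand side of the fixed-point equation on $t$ both explicitly (through $\kappa$ and $f$) and implicitly (through $u$), and verifying that $\Phi=\psi^{-1}(t,\cdot)$ retains the required number of weak derivatives. This is precisely where the non-degeneracy bound \eqref{item4} and the smoothness \eqref{item3} of $\psi$ are genuinely needed; by contrast, the reduction and the contraction argument are routine.
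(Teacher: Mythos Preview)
Your reduction step and fixed-point strategy coincide with the paper's: both differentiate \eqref{asli} (using $f(0)=0$) to obtain the second-kind equation
\[
\psi(t,u(t))+\int_0^t \frac{\kappa_t(s,t)}{\kappa(t,t)}\,\psi(s,u(s))\,\mathrm{d}s=\frac{f'(t)}{\kappa(t,t)},
\]
and both exploit the strict monotonicity of $u\mapsto\psi(t,u)$ (assumption \eqref{item4}) to invert $\psi$ and set up a fixed-point iteration. The difference lies in how regularity is handled. The paper runs an explicit Picard sequence $\{u_n\}$, invokes the Inverse Theorem to argue that each $u_n$ already lies in $H^{m-1}(I)$, shows the sequence is uniformly Cauchy via the Lipschitz bound, and then asserts that the uniform limit inherits the $H^{m-1}$ membership. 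You instead first secure existence and uniqueness in $C(I)$ via a Bielecki-norm contraction, and then run a separate regularity bootstrap $H^{j}\Rightarrow H^{j+1}$ on the fixed-point identity itself. Your organization is arguably cleaner: the paper's last step (passing $H^{m-1}$ through a uniform limit) is stated without a norm bound, whereas your bootstrap isolates precisely the composition/inversion estimates that must be checked---and you correctly flag this as the delicate point. Conversely, the paper's route avoids having to analyze the regularity of $\Phi=\psi(t,\cdot)^{-1}$ as a two-variable map, since it works with $\psi(t,u_{n+1}(t))=\,\cdots$ directly rather than solving for $u_{n+1}$ explicitly.
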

	\begin{proof}
		At the first step, we differentiate both sides of  Eq. \eqref{asli} in the sense of distributional derivative, hence from the assumptions (\textit{\ref{item1}}-\textit{\ref{item3}}), it reads that 
		\begin{equation}\label{sec}
		\psi(t,u(t))+\int_0^t \frac{\kappa_t(s,t)}{\kappa(t,t)}\psi(s,u(s))\mathrm{d}s=\frac{f^\prime(t)}{\kappa(t,t)}.
		\end{equation}     
	These conditions lead that each function $ u(t)$ is a solution of Eq. \eqref{sec} if and only if it is a solution of Eq. \eqref{asli}.
In order to prove the existence of a solution for Eq. \eqref{sec}, we trace \cite{MR847018} and  define the sequence $ \{u_n(t)\}$ as follows:
		\begin{equation}
		\begin{split}
		\psi(0,u_0(t))&:=\frac{f^\prime(0)}{\kappa(0,0)},\\
		\psi(t,u_{n+1}(t))&:=\frac{f^\prime(t)}{\kappa(t,t)}-\int_{0}^{t} \frac{\kappa_{t}(s,t)}{\kappa(t,t)}\psi(s,u_n(s))\mathrm{d}s, \quad n\geq 0.
		\end{split}
		\end{equation} 
		By the assumptions (\textit{\ref{item3}})  and (\textit{\ref{item4}}), $ \psi(t,u(t)) $ is strictly monotonic continuous function
		with respect to $ u $. So by considering the Inverse Theorem \cite[p. 68]{courant}, $ u_0 $ is well-defined and belongs to  $ H^{m-1}(I)$.
		Now using induction hypothesis,  $ u_n $ is well-defined and belongs to $  H^{m-1}(I). $
		From the assumptions (\textit{\ref{item1}}), (\textit{\ref{item2}}) and (\textit{\ref{item3}}), we deduce that the function
		 \[ \frac{f^\prime(t)}{\kappa(t,t)}-\int_0^t \frac{\kappa_t(t,s)}{\kappa(t,t)}~\psi(s,u_n(s))\mathrm{d}s, \]
	     belongs to $  H^{m-1}(I). $ Hence by the Inverse Theorem,  $ u_{n+1}\in H^{m-1}(I) $. 
		Using the assumptions (\textit{\ref{item4}}) and (\textit{\ref{item5}})
		one can conclude that 
	{	\[\vert u_{n+1}(t)-u_n(t)\vert \leq \Big(\frac{JL}{M}\Big)^n\frac{t^n}{n!}\max\limits_{s \in I} \vert u_1(s)-u_0(s)\vert,\]}
		where $ L $ is the Lipschitz constant in the assumption (\textit{\ref{item5}}) and $J:=\max \big\{ \vert \frac{k_{t}(t,s)}{k(t,t)} \vert ~\big|~ (t,s) \in I \times I \big\}$. Therefore, without loss of generality for $ m>n,$
		\[\vert u_{m}(t)-u_n(t)\vert \leq \sum_{i=n}^{m-1}\vert u_{i+1}(t)-u_i(t)\vert\leq \Vert u_1(t)-u_0(t)\Vert_{\infty}\sum_{i=n}^{m-1}\Big(\frac{JL T}{M}\Big)^{^i}\frac{1}{i!}.\]
		The term $\sum\limits_{i=0}^{\infty}(\frac{JL T}{M})^{^i}\frac{1}{i!}  $ is convergent, so the Cauchy sequence $ \lbrace u_n\rbrace $ is convergent uniformly to
		\[\lim\limits_{n\rightarrow \infty}u_n(t)=u(t),\]
		where $ u(t) $ belongs to $  H^{m-1}(I).$ This result follows from the fact that  $ u_n(t)\in   H^{m-1}(I)$.
\end{proof}
\section{Numerical scheme}\label{prelim}
In this section, we propose an $hp$-version Legendre collocation method for Volterra integral equation of the first kind. To make the paper self-contained, some basic properties of the shifted Legendre polynomial interpolation are introduced in the following subsection.
\subsection{Preliminaries}
{\bf The Legendre-{Gauss} interpolation operator.}   We denote $ \lbrace t_i,w_i\rbrace_{i=0}^{M} $ as the Legendre-{Gauss} quadrature nodes in $ (-1,1)$ and their corresponding weights. Set $ \Lambda:=(-1,1]$ and let $ \mathcal{P}_{M}(\Lambda) $ be the set of polynomials of degree at most $M$. For any function $ \phi \in  \mathcal{P}_{{2M+1}}(\Lambda) $, the following equality could be obtained from the main property of Gauss quadrature,
\begin{equation}\label{eq2.10}
\int_{\Lambda}\phi(t)\mathrm{d}t=\sum_{j=0}^M w_j\phi(t_j).\end{equation}
Thanks to the above equation, for any $ \phi\psi \in \mathcal{P}_{{2M+1}}(\Lambda) $ and  $ \phi \in  \mathcal{P}_{{M}}(\Lambda)$,
\begin{equation}\label{eq11}
(\phi,\psi)=\langle \phi,\psi\rangle_{M},
\end{equation}
where $ (.,.) $ denotes the inner product of $ L^2(\Lambda)$ and the discrete inner product 
$$ \langle u,v\rangle_M:=\sum_{i=0}^{M} w_j u(t_j)v(t_j), \quad \Vert v\Vert_M=\langle v,v\rangle^\frac{1}{2}_M.$$
Let define $ \mathcal{I}_M^{ t}: C(\Lambda)\rightarrow \mathcal{P}_M(\Lambda)$ as the Legendre-Gauss interpolation operator in the $ t $-direction with the following property
\[\mathcal{I}_M^tv(t_j)=v(t_j), \quad 0\leq j\leq M.\]
Regarding the relation \eqref{eq11}, for any $ \phi \in  \mathcal{P}_{{M+1}}(\Lambda),$
\begin{equation}
\label{eqI}(\mathcal{I}_M^tv,\phi)=\langle \mathcal{I}_M^tv, \phi\rangle_M=\langle v, \phi \rangle_M.
\end{equation}
{ Let $ L_i(t) $ be defined as  
\begin{equation*}
	\begin{split}
		L_i(t)=\left\lbrace \begin{array}{lcc}
			l_i(t),&  t\in \Lambda,\\
			0,& \text{o.w},
		\end{array}\right.
	\end{split}
\end{equation*}
where $ l_i(t) $ is the Legendre polynomial of degree $i$.}

 Since the set of Legendre polynomials form an orthogonal complete set in $ L^{2}(\Lambda)$, namely, a function $ v\in L^{2}(\Lambda) $ can be represented as 
\[v(t)=\sum_{i=0}^{\infty} c_iL_i(t),\]
so $  \mathcal{I}_M^t v(t)$ may expand as 
\begin{equation}
 \mathcal{I}_M^t v(t)=\sum_{i=0}^M\hat{v}_iL_i(t),
\end{equation}
by using the orthogonality condition of the Legendre polynomials 
\[\hat{v}_i=\frac{2i+1}{2}( \mathcal{I}_M^t v,L_i)=\frac{2i+1}{2}\langle { v},L_i\rangle_M, \quad i= 0,1, \dots, M.\]
\subsection{Description of the numerical scheme} \label{des of numerical scheme}
For a fixed integer $ N $, let $ I_h:=\lbrace t_n:~0=t_0<t_1< \dots <t_N=T\rbrace$ be  a mesh on $ I$, $ h_n:=t_n-t_{n-1} $ and $h_{\max}=\max\limits_{1\leq n\leq N} h_n  $. Moreover, denote  $ u^{n}(t) $ the solution of Eq. \eqref{asli} on the $ n$-th subinterval of $ I,$ namely,
\begin{equation}\label{eqe}
u^{n}(t)=u(t), \hspace{0.6in}t\in I_n:=(t_{n-1},t_n], \quad n=1,2, \dots, N.
\end{equation}
By above mesh, we rewrite the Eq. \eqref{asli} as
\[\int_{0}^{t_{n-1}} \kappa(s,t) \psi(s,u(s))\mathrm{d}s+\int_{t_{n-1}}^{t} \kappa(s,t) \psi(s,u(s))\mathrm{d}s=f(t),\]
then for any $ t\in I_n $, this equation can be written as
\begin{equation}\label{eq2}
\int_{t_{n-1}}^{t} \kappa(s,t) \psi(s,u^n(s))\mathrm{d}s=f(t)- \sum\limits_{k=1}^{n-1}\int_{t_{k-1}}^{t_{k}} \kappa(\xi ,t) \psi(\xi ,u^{k}(\xi))\mathrm{d}\xi.
\end{equation} 
The problem \eqref{eq2} is converted into an equivalent problem in $ \Lambda:=(-1,1].$ For this aim, we transfer $ t\in I_n$ to $ x\in \Lambda $ by 
\[t=\frac{h_nx+t_{n-1}+t_n}{2},\]
in other words, we have 
\begin{equation}\label{eq3}
\begin{aligned}
\int_{t_{n-1}}^{\frac{h_nx+t_{n-1}+t_n}{2}} \kappa\Big(s,\frac{h_nx+t_{n-1}+t_n}{2}\Big) \psi(s,u^n(s))\mathrm{d}s=f\Big(\frac{h_nx+t_{n-1}+t_n}{2}\Big)\\- \sum\limits_{k=1}^{n-1}\int_{t_{k-1}}^{t_{k}} \kappa\Big(\xi ,\frac{h_nx+t_{n-1}+t_n}{2}\Big) \psi(\xi ,u^{k}(\xi))\mathrm{d}\xi.
\end{aligned}
\end{equation} 
Furthermore, the integral intervals $ I_k $ and $ \Big(t_{n-1},\frac{h_nx+t_{n-1}+t_n}{2}\Big] $ can { be converted} to $ \Lambda $ and $ (-1,x],$ respectively under the following transformation
\[\xi=\frac{{ h_k}\eta+t_{k-1}+t_k}{2},~~~~~s=\frac{h_n\tau+t_{n-1}+t_n}{2}.\]
Hence, Eq. \eqref{eq3} becomes

\begin{equation}\label{eq4}
\begin{split}
&\frac{h_n}{2}\int_{-1}^{x} \kappa\Big(\frac{h_n\tau+t_{n-1}+t_n}{2},\frac{h_nx+t_{n-1}+t_n}{2}\Big)\psi\Big(\frac{h_n\tau+t_{n-1}+t_n}{2},u^n(\frac{h_n\tau+t_{n-1}+t_n}{2})\Big)\mathrm{d}\tau \\
=&f\big(\frac{h_nx+t_{n-1}+t_n}{2}\Big)\\
&- \sum\limits_{k=1}^{n-1} \frac{h_k}{2}\int_{\Lambda} \kappa\Big(\frac{h_k\eta+t_{k-1}+t_k}{2} ,\frac{h_nx+t_{n-1}+t_n}{2}\Big) \psi\Big(
\frac{h_k\eta +t_{k-1}+t_k}{2},u^{k}(\frac{h_k\eta+t_{k-1}+t_k}{2})\Big)\mathrm{d}\eta.
\end{split}
\end{equation} 
Finally, using the linear transform 
{ \begin{equation}\label{tau} \tau=\sigma(x,\theta):=\frac{1+x}{2}\theta+\frac{1-x}{2},\end{equation}}
Eq. \eqref{eq4} reads
\begin{equation}\label{eq5}
\begin{split}
\frac{h_n}{4}(1+x)\int_{\Lambda} \tilde{\kappa}^n(\sigma(x,\theta) ,x) {\tilde{\psi}^n}\big(\sigma(x,\theta) ,\tilde{u}^n(\sigma(x,\theta))\big)\mathrm{d}\theta=\\ { \tilde{f}^n(x)}\vspace*{0.2in}- \sum\limits_{k=1}^{n-1} \frac{h_k}{2}\int_{\Lambda} \tilde{\kappa}^k(\eta ,x) { \tilde{\psi}^k}\big(\eta,\tilde{u}^{k}(\eta)\big)\mathrm{d}\eta,\quad x\in \Lambda,
\end{split}
\end{equation}
where 
\begin{equation}\label{eqt}
\begin{split}
\tilde{u}^k(x)&=u^{k}\big(\frac{h_kx+t_{k-1}+t_k}{2}\big),\\ \quad { \tilde{f}^n(x)}&=f\big(\frac{h_nx+t_{n-1}+t_n}{2}\big),\\
\tilde{\kappa}^{k}(\eta,x)&=\kappa\big(\frac{h_k\eta+t_{k-1}+t_k}{2} ,\frac{h_nx+t_{n-1}+t_n}{2}\big),\\
{\tilde{\psi}^{k}(\eta,x)}&{=\psi\big(\frac{h_k\eta+t_{k-1}+t_k}{2} ,x\big).}
\end{split}
\end{equation}
\subsection{The $ hp $-version of Legendre-Gauss collocation method}
In order to seek a solution $ \tilde{u}^n_{M_n}(x)\in \mathcal{P}_{M_n}(\Lambda)$ of Eq. \eqref{eq5} by $hp$-collocation method, at first step this equation will be fully discretized as
\begin{equation}\label{eq6}
\begin{split}
\mathcal{I}^x_{M_n}\Bigg(\frac{h_n}{4}(1+x)\int_{\Lambda} \mathcal{I}^\theta_{M_n}\Big( \tilde{\kappa}^n(\sigma(x,\theta) ,x) {{ \tilde{\psi}^{n}}}\big(\sigma(x,\theta) ,\tilde{u}^n_{M_n}(\sigma(x,\theta))\big)\Big) \mathrm{d}\theta \Bigg)\vspace*{0.2in}\\=\mathcal{I}^x_{M_n}\left( \tilde{f}^{n}(x)\vspace*{0.2in}- \sum\limits_{k=1}^{n-1} \frac{h_k}{2}\int_{\Lambda}\mathcal{I}^\eta_{M_k}\big( \tilde{\kappa}^k(\eta ,x) {{ \tilde{\psi}^{k}}}(\eta,\tilde{u}^{k}_{M_k}(\eta))\big)\mathrm{d}\eta \right),\quad x\in \Lambda  ,
\end{split}
\end{equation}
where
\begin{equation}\label{eql}
\begin{split}
&{\tilde{u}^n_{M_n}(x)}=\sum_{p=0}^{M_n}\hat{u}^{n}_p L_{p}(x), \\
&\mathcal{I}^x_{M_n}\mathcal{I}^\theta_{M_n}\Big((1+x)\tilde{\kappa}^n(\sigma(x,\theta) ,x) {{ \tilde{\psi}^{n}}}\big(\sigma(x,\theta) ,\tilde{u}^n_{M_n}(\sigma(x,\theta))\big)\Big)=\sum_{p,q=0}^{M_n}a^{n}_{pq} L_{p}(x)L_q(\theta),\\
&\mathcal{I}^x_{M_n}{\mathcal{I}^\eta_{M_k}}\Big(\tilde{\kappa}^k(\eta ,x) {{ \tilde{\psi}^{k}}}\big(\eta,\tilde{u}^{k}_{M_k}(\eta)\big)\Big)=\sum_{p=0}^{M_n}\sum_{q=0}^{M_k}b^k_{pq} L_{p}(x)L_q(\eta),
\end{split}
\end{equation}
and
\begin{equation}\label{eq8}
{\mathcal{I}^x_{M_n}\tilde{f}^{n}}(x)=\sum_{p=0}^{M_n}c^{n}_p L_{p}(x).
\end{equation}
Then, we have
\begin{equation}\label{eq9}
\begin{aligned}
\int_\Lambda\mathcal{I}^x_{M_n}\mathcal{I}^\theta_{M_n}\big((1+x)\tilde{\kappa}^n(\sigma(x,\theta) ,x) {{ \tilde{\psi}^{n}}}(\sigma(x,\theta) ,\tilde{u}^n_{M_n}(\sigma(x,\theta)))\big)\mathrm{d}\theta &=\sum_{p,q=0}^{M_n}a^{n}_{pq} L_{p}(x)\int_\Lambda L_q(\theta)\mathrm{d}\theta\\&=2\sum_{p=0}^{M_n}a^{n}_{p0} L_{p}(x),
\end{aligned}
\end{equation}
and similarly,
\begin{equation}\label{eq10}
\begin{aligned}
\int_\Lambda \mathcal{I}^x_{M_n}\mathcal{I}^\eta_{{M_k}}\Big(\tilde{\kappa}^k(\eta ,x) { \tilde{\psi}^{k}}(\eta,\tilde{u}^{k}_{M_k}(\eta))\Big)\mathrm{d}\eta &=\sum_{p=0}^{M_n}\sum_{q=0}^{M_k}b^k_{pq} L_{p}(x)\int_\Lambda L_q(\eta)\mathrm{d}\eta\\&=2\sum_{p=0}^{M_n}b^{k}_{p0} L_{p}(x).
\end{aligned}
\end{equation}
We denote the Legendre-{ Gauss} quadrature nodes  and the corresponding weights in $ (-1,1) $ by $\big \{x_{k,i},w_{k,i}\big\}_{i=0}^{M_k} $ which are related to $ k$-th subinterval. It can be determined from Eqs. \eqref{eq8}-\eqref{eq10} that
\begin{equation}
\begin{aligned}
\hat{u}_p^n=&\frac{2p+1}{2}\sum_{i=0}^{M_n}\tilde{u}^n_{M_n}(x_{n,i})L_p(x_{n,i})w_{n,i},\\
a_{p0}^n=&\frac{2p+1}{4}\sum_{i,j=0}^{M_n}(1+x_{n,i})\tilde{\kappa}^n\big(\sigma(x_{n,i},x_{n,j}) ,x_{n,i}\big) {{ \tilde{\psi}^{n}}}\big(\sigma(x_{n,i},x_{n,j}) ,\tilde{u}^n_{M_n}(\sigma(x_{n,i},x_{n,j}))\big)\\&L_p(x_{n,i})w_{n,i}w_{n,j},\\
b_{p0}^k=&\frac{2p+1}{4}\sum_{i=0}^{M_n}\sum_{j=0}^{M_k}\tilde{\kappa}^k({x_{k,j},x_{n,i}}) {\tilde{\psi}^{k}\big(x_{k,j} ,\tilde{u}^k_{M_k}(x_{k,j})}\big)L_p(x_{n,i})w_{n,i}w_{{k},j},\\
c_p^n=&\frac{2p+1}{2}\sum_{i=0}^{M_n}\tilde{f}^n(x_{n,i})L_p(x_{n,i})w_{n,i}.
\end{aligned}
\end{equation}
{ With Eqs. \eqref{eq8}-\eqref{eq10}, Eq. \eqref{eq6} reads}
\[0=\sum_{p=0}^{M_n}c^{n}_p L_{p}(x){-}\sum_{p=0}^{M_n}\tilde{a}^{n}_p L_{p}(x){-}\sum_{p=0}^{M_n}\tilde{b}^{n}_p L_{p}(x),\]
where
\[\tilde{a}^{n}_p=\frac{1}{2}h_na_{p0}, \qquad \tilde{b}^{n}_p=\sum_{k=1}^{n-1}h_k b^{k}_{p0}.\]
Consequently, we compare the expansion coefficient to obtain 
\begin{equation}\label{eq7}
0=\tilde{a}_p^n+\tilde{b}_p^{n}-c^n_p,\quad 0\leq p \leq M_n.
\end{equation}
 To evaluate  the unknown coefficients $ \hat{u}^n_p $ for any given $ n $, we solve the nonlinear system \eqref{eq7} with the Newton  iteration method. { Finally the approximate solution by Eqs. \eqref{eqe}, \eqref{eqt} and \eqref{eql} is 
\begin{equation}\label{unm}
u_{_M}^{N}(t)=\sum_{k=1}^{N}u^k(t)=\sum_{k=1}^{N}\tilde{u}_{M_k}^k\Big(\frac{2t-t_{k-1}-t_k}{h_k}\Big)=\sum_{k=1}^{N}\sum_{p=0}^{M_k}\hat{u}^k_p L_p\Big(\frac{2t-t_{k-1}-t_k}{h_k}\Big), \qquad t \in I.
\end{equation}}
{\begin{remark}\label{rem2}
For the linear case of Eq. \eqref{asli}, all mentioned relations are valid with $ \psi(t,u(t))=u(t).$ Therefore, the unknown coefficients  $ \hat{u}^n_p $ for any given $ n $ can be obtained by the following linear system of equations
\begin{equation}\label{eqr}
A{\bf u}={\bf b}+{\bf c},
\end{equation}
where the entries of the matrix $ A=[a_{i,j}]_{i,j=0}^{M_n}$ are defined by
\[
a_{i,j}=\frac{2p+1}{8}h_n\sum_{i,j=0}^{M_n}(1+x_{n,i})\tilde{\kappa}^n\big(\sigma(x_{n,i},x_{n,j}) ,x_{n,i}\big) L_q\big(\sigma(x_{n,i},x_{n,j})\big)L_p(x_{n,i})w_{n,i}w_{n,j},
\]
\end{remark}
and 
\[{\bf u}=(\hat{u}^n_0, \dots ,\hat{u}^n_{M_n})^T,\quad {\bf b}=(\tilde{b}^n_0, \dots ,\tilde{b}^n_{M_n})^T,\quad {\bf c}=(c^n_0, \dots,c^n_{M_n})^T. \]
\begin{remark}
		In a general statement, $hp$-collocation method can be categorized as a projection method with an appropriate uniformly bounded projector.  In this problem, due to the smoothness of the kernel and the right hand side function $f(t)$, the existence of the solution for Eq. \eqref{eq6} can be inferred from the general  framework for analysis of the projection methods,  for more details see \cite{ MR0502103, krasno, kress}.
\end{remark}}
\section{Error analysis}\label{error analysis}
In this section, we should give functional framework and for this aim some weighted Sobolev spaces are defined. 

Let us define the weight function $ \chi^{(\alpha, \beta)}(x)=(1-x)^{\alpha}(1+x)^{\beta} $ for $ \alpha, \beta >-1 $. For $ r \in \mathbb{N}$, $ H^{r}_{\chi^{(\alpha,\beta)}}(\Lambda) $ is a weighted Sobolev space defined by
\[
H^{r}_{\chi^{(\alpha,\beta)}}(\Lambda) = \Big\{ v ~|~ v ~\text{is measurable and } \Vert v \Vert_{r, \chi^{(\alpha, \beta)}} < \infty \Big\},
\]
equipped with the following norm  
\[
\Vert v \Vert_{r, \chi^{(\alpha, \beta)}} = \Big( \sum_{k=0}^{r} \Vert \partial^{{ k}}_{x}v \Vert^{2}_{\chi^{(\alpha+k,\beta+k)}} \Big)^\frac{1}{2},
\]
and semi-norm
\[
\vert v \vert_{r, \chi^{(\alpha, \beta)}} = \Vert \partial^{r}_{x}v \Vert_{\chi^{(\alpha +r, \beta+r)}},
\]
where  $ \Vert . \Vert_{\chi^{(\alpha, \beta)}}  $ is an appropriate norm for the space $ L^{2}_{\chi^{(\alpha, \beta)}}(\Lambda)$.
{ Throughout this paper, we denote  $ \Vert.\Vert $ as $ L^2 $-norm and $ M_{\min}=\min_{1\leq n\leq N}M_n. $}
\begin{lemma}(\cite{hack})\label{lem2} (Gr\"{o}nwall inequality)
	Assume that there are numbers $ \alpha,~ \beta_l\geq 0~ (l=0,1, \dots, n-1)$ and $ 0 \leq M_{_0} <1 $ such that 
	\[ 0 \leq \varepsilon_n \leq \alpha+\sum_{l=0}^{n-1} \beta_l\varepsilon_l+M_{_0} \varepsilon_n, \quad n\geq 1.\]
	Then the quantities $ \varepsilon_n $ fulfill the following estimate for $ n\geq 0$
	\[\varepsilon_n\leq \frac{\alpha}{1-M_{_0}}\exp(\sum_{l=0}^{n-1}\frac{\beta_l}{1-M_{_0}}).\]
\end{lemma}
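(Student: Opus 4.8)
The plan is to run the standard discrete Grönwall argument, the only preparatory move being to absorb the self-referential term $M_{0}\varepsilon_n$ into the left-hand side. Since $0\le M_{0}<1$, the hypothesis $\varepsilon_n\le \alpha+\sum_{l=0}^{n-1}\beta_l\varepsilon_l+M_{0}\varepsilon_n$ rearranges to
\[
\varepsilon_n\le \tilde\alpha+\sum_{l=0}^{n-1}\tilde\beta_l\,\varepsilon_l,\qquad
\tilde\alpha:=\frac{\alpha}{1-M_{0}},\quad \tilde\beta_l:=\frac{\beta_l}{1-M_{0}}\ge 0 .
\]
Taking this inequality at $n=0$ (where the sum is empty) already gives the base estimate $\varepsilon_0\le\tilde\alpha$, which is exactly the claimed bound for $n=0$; note this uses that the stated assumption is understood to hold for $n=0$ as well, with the convention of an empty sum.

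Next I would introduce the majorant sequence $S_n:=\tilde\alpha+\sum_{l=0}^{n-1}\tilde\beta_l\varepsilon_l$, so that $S_0=\tilde\alpha$, $\varepsilon_n\le S_n$ for all $n\ge 0$, and $S_{n+1}-S_n=\tilde\beta_n\varepsilon_n\le \tilde\beta_n S_n$, i.e. $S_{n+1}\le(1+\tilde\beta_n)S_n$; here nonnegativity of $\tilde\beta_n$ and of $\varepsilon_n$ is what makes this monotonicity step legitimate. Iterating this one-step bound yields
\[
\varepsilon_n\le S_n\le S_0\prod_{l=0}^{n-1}(1+\tilde\beta_l)=\tilde\alpha\prod_{l=0}^{n-1}(1+\tilde\beta_l).
\]

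Finally I would apply the elementary inequality $1+x\le e^{x}$ (valid for $x\ge 0$, hence for each $\tilde\beta_l$) to pass from the product to the exponential:
\[
\varepsilon_n\le \tilde\alpha\prod_{l=0}^{n-1}e^{\tilde\beta_l}
=\frac{\alpha}{1-M_{0}}\exp\!\Big(\sum_{l=0}^{n-1}\frac{\beta_l}{1-M_{0}}\Big),
\]
which is the asserted estimate, and it visibly also covers $n=0$. There is no genuine obstacle in this argument; the only points requiring a little care are the legitimacy of absorbing $M_{0}\varepsilon_n$ (needing $M_{0}<1$), the treatment of the $n=0$ base case, and the sign conditions $\beta_l\ge 0$, $\varepsilon_l\ge 0$ that underpin the monotone-majorant step. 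An equivalent route, which I would mention as an alternative, is to prove $\varepsilon_n\le\tilde\alpha\prod_{l=0}^{n-1}(1+\tilde\beta_l)$ directly by induction on $n$, substituting the induction hypothesis for $\varepsilon_0,\dots,\varepsilon_{n-1}$ into the rearranged inequality; this avoids naming $S_n$ but is otherwise identical in content.
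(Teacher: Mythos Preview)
The paper does not supply a proof of this lemma; it is quoted with a citation to \cite{hack} and then used as a black box in the proof of Theorem~\ref{te1}. Your argument is the standard discrete Gr\"onwall proof---absorb the $M_0\varepsilon_n$ term, build the majorant $S_n$, telescope to a product, and bound $1+x\le e^x$---and it is correct. The one soft spot you already identified is real: the hypothesis in the lemma is stated only for $n\ge 1$, yet the conclusion is claimed for $n\ge 0$, so the base bound $\varepsilon_0\le\alpha/(1-M_0)$ is not literally contained in the assumptions as written; in the paper's application (and in the source text) the recursive inequality is tacitly taken to hold at $n=0$ as well, with the empty-sum convention.
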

In the following, some theoretical results regarding the convergence of the method are stated.
\begin{lemma}\label{lem1}
	For any $\tilde{v}\in H^{^m}_{\chi_{(0,0)}}(\Lambda)$ with integer $1\leq m \leq M_n+1$ and $ 1\leq n \leq N, $
	\[\Vert \tilde{v}-\mathcal{I}_{M_n}^x \tilde{v}\Vert^2\leq c M_{n}^{-2m}\Vert \partial^m_x \tilde{v}\Vert_{\chi^{(m,m)}}^2\leq ch_n^{2m-1}M_n^{-2m}\Vert \partial^m_t v\Vert_{L^2(I_n)}^2,\]
	where $ \tilde{v}(x)=v(t)\Big \vert_{t=\frac{h_nx+t_{n-1}+t_n}{2}}. $
\end{lemma}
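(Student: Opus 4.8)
The plan is to reduce the claimed estimate to the classical $L^2$-error bound for Legendre--Gauss interpolation on the reference interval $\Lambda=(-1,1]$, and then to convert back to the physical interval $I_n$ by tracking the Jacobian factors introduced by the affine map $t=\tfrac{h_nx+t_{n-1}+t_n}{2}$. The first inequality, $\Vert \tilde v-\mathcal{I}^x_{M_n}\tilde v\Vert^2\le c\,M_n^{-2m}\Vert\partial^m_x\tilde v\Vert^2_{\chi^{(m,m)}}$, is a standard approximation result for the Legendre--Gauss (i.e.\ interpolation at the roots of $L_{M_n+1}$ together with, in this one-sided convention, the right endpoint) interpolation operator in the Jacobi-weighted Sobolev scale; it is exactly of the form found in Guo--Wang type estimates and in the references \cite{guo, sheng} the paper cites. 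I would simply invoke that result, noting that here the underlying weight is $\chi^{(0,0)}\equiv 1$, so $\Vert\cdot\Vert=\Vert\cdot\Vert_{\chi^{(0,0)}}$ and the hypothesis $\tilde v\in H^m_{\chi^{(0,0)}}(\Lambda)$ with $1\le m\le M_n+1$ is precisely the regularity needed. No new work is required for this half beyond citing the cited literature.

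The second inequality is the genuinely new, if routine, step: passing from the reference interval to $I_n$. I would start from $\tilde v(x)=v\big(\tfrac{h_nx+t_{n-1}+t_n}{2}\big)$, so that by the chain rule $\partial^m_x\tilde v(x)=(h_n/2)^m\,(\partial^m_t v)\big(\tfrac{h_nx+t_{n-1}+t_n}{2}\big)$. Then
\[
\Vert\partial^m_x\tilde v\Vert^2_{\chi^{(m,m)}}
=\Big(\frac{h_n}{2}\Big)^{2m}\int_{-1}^{1}(1-x)^m(1+x)^m\,\big|(\partial^m_t v)(t(x))\big|^2\,\mathrm dx,
\]
and with $t=\tfrac{h_nx+t_{n-1}+t_n}{2}$, $\mathrm dt=\tfrac{h_n}{2}\mathrm dx$, the weight satisfies $0\le (1-x)^m(1+x)^m\le 1$ on $[-1,1]$, so the integral is bounded by $\tfrac{2}{h_n}\int_{I_n}|\partial^m_t v(t)|^2\,\mathrm dt$. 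Collecting the powers of $h_n$ gives $\Vert\partial^m_x\tilde v\Vert^2_{\chi^{(m,m)}}\le c\,h_n^{2m-1}\Vert\partial^m_t v\Vert^2_{L^2(I_n)}$, and combining with the first inequality yields the stated chain. Here $c$ absorbs the $2^{-2m}$ factor and the universal constant from the reference-interval estimate.

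The only point that needs a little care — and the one I would flag as the main (mild) obstacle — is making sure the weight estimate goes in the right direction: we need an \emph{upper} bound on a norm involving $\chi^{(m,m)}$, and since $\chi^{(m,m)}(x)=(1-x)^m(1+x)^m\le 1$ for $x\in[-1,1]$ and $m\ge 1$, bounding the weight by $1$ is exactly what is wanted; the reverse bound would be false. So the argument is one-directional and clean. One should also note that the interpolation estimate on $\Lambda$ is stated for the \emph{same} $M_n$ appearing in the hypothesis $m\le M_n+1$, and that the operator $\mathcal{I}^x_{M_n}$ is the one defined in Section~\ref{prelim} via interpolation at the $M_n+1$ Legendre--Gauss nodes, so there is no mismatch. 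I would write the two displays above, cite the reference-interval bound, and conclude.
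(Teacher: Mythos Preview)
Your proof is correct and follows essentially the same route as the paper: cite the reference-interval estimate from \cite{guo} for the first inequality, then for the second apply the chain rule and bound the Jacobi weight $(1-x^2)^m\le 1$ (the paper equivalently changes variables first and then bounds $(t-t_{n-1})^m(t_n-t)^m\le (h_n/2)^{2m}$, which is the same estimate after scaling). One small inaccuracy that does not affect the argument: the Legendre--Gauss nodes used here are the $M_n+1$ zeros of $L_{M_n+1}$ in the open interval $(-1,1)$, with no endpoint included.
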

\begin{proof}
	First inequality is proved in \cite{guo}. For the second {inequality}, we have
	{\begin{equation}
	\begin{split}
	\Vert \tilde{v}-\mathcal{I}_{M_n}^x \tilde{v}\Vert^2 &\leq c M_n^{-2m}\int_{\Lambda} (\partial^m_x \tilde{v})^2 (1-x^2)^m \mathrm{d}x \\
	&=c M_n^{-2m}(\frac{h_n}{2})^{-2m-1}\int_{I_n} (\partial^m_t \tilde{v})^2 (\partial^m_x t)^2 (t-t_{n-1})^m(t_n-t)^m \mathrm{d}t\\
	&= ch_n^{-1}M_n^{-2m}\int_{I_n} (\partial^m_t v)^2 (t-t_{n-1})^m(t_n-t)^m \mathrm{d}t\\& \leq ch_n^{-1}M_n^{-2m}\Vert \partial^m_t v\Vert_{L^2(I_n)}^2\max_{t\in I_n}\big((t-t_{n-1})^m(t_n-t)^m\big)\\&\leq ch_n^{2m-1}M_n^{-2m}\Vert \partial^m_t v\Vert_{L^2(I_n)}^2.\\
	\end{split}
	\end{equation}
	Note that $ t=\frac{h_nx+t_{n-1}+t_n}{2}$ and $\partial^m_x t=(\frac{h_n}{2})^m$. }
\end{proof}
\begin{theorem}\label{theorem11}
	Let $\tilde{u}^n$ be the solution of Eq. \eqref{eq5} { under the hypothesis of Theorem 1 }and $\tilde{u}^n_{M_n}$ be the solution of Eq. {\eqref{eq6}}. Therefore, assume that $\kappa(s,t)\in C^{^m}(I\times I)$,
	$f(t)\Big\rvert_{I_n}\in H^{^{m}}(I_n)$, 
	$u(t)\Big\rvert_{I_n}\in H^{^m}(I_n)$
	and 
	$\psi:I_n\times H^{^m}(I_n) \rightarrow H^{^m}(I_n)$ for $n=1,2, \dots ,N, {~m\leq M_{\min}+1}$
	and $\psi(.,u) $ fulfills the Lipschitz condition with respect to the second variable, i.e.,
	\begin{equation}\label{lip}
	\vert \psi(.,u_1)-\psi(.,u_2)\vert\leq \gamma \vert u_1-u_2 \vert,\quad \gamma \geq 0.
	\end{equation}
	Then, for any $1\leq n\leq N$,
	\[B_1(x)=B_2(x)+B_3(x),\] 
	with 
	\begin{equation}
	\begin{aligned}
	\Vert B_1(x)\Vert^2&\leq c h_n^{2m-1}M_n^{-2m}\Vert\partial^{m}_t f\Vert_{L^2(I_n)}^2+c T \sum_{k=1}^{n-1}\Big(\gamma^2 h_k \Vert e_k\Vert^2+c h_k^{2m}M_k^{-2m}\big(\gamma^2\Vert \partial^{m}_t u\Vert^2_{L^2(I_k)}\\&+\Vert \psi(.,u)\Vert^2_{H^m(I_k)}\big)\Big)+c T h_n^{2m}M_n^{-2m}\sum_{k=1}^{n-1}\Vert \psi(.,u)\Vert^2_{H^1(I_k)},
	\end{aligned}
	\end{equation}
	where
	\begin{equation}\label{eq14}
	\begin{split}
	B_1(x)&=\frac{h_n}{4}\Big((1+x) \tilde{\kappa}^n(\sigma(x,.) ,x) ,{{\tilde{\psi}^{n}}}\big(\sigma(x,.) ,\tilde{u}^n(\sigma(x,.))\big)\Big)-\frac{h_n}{4}\mathcal{I}_{M_n}^x\Big\langle(1+x)\tilde{\kappa}^n(\sigma(x,.) ,x), {{ \tilde{\psi}^{n}}}\big(\sigma(x,.) ,\tilde{u}^n_{M_n}(\sigma(x,.))\big)\Big\rangle_{{M_n}},\vspace{0.1in}\\
	B_2(x)&=\tilde{f}^n(x)-\mathcal{I}_{M_n}^x\tilde{f}^n(x),\vspace{0.1in}\\
	B_3(x)&=-\sum\limits_{k=1}^{n-1} \frac{h_k}{2}\Big( \tilde{\kappa}^k(. ,x), {{ \tilde{\psi}^{k}}}\big(.,\tilde{u}^{k}(.)\big)\Big)+\sum\limits_{k=1}^{n-1} \frac{h_k}{2}\mathcal{I}_{M_n}^x \Big\langle \tilde{\kappa}^k(.,x), {{ \tilde{\psi}^{k}}}\big(.,\tilde{u}_{M_k}^{k}(.)\big)\Big\rangle_{M_k},
	\end{split}
	\end{equation}
	and $ e_k=\tilde{u}^{k}-\tilde{u}^{k}_{M_k},\quad 1\leq k\leq N$.
\end{theorem}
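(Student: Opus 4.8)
The plan is to establish the identity $B_1(x) = B_2(x) + B_3(x)$ first, and then bound $\|B_1(x)\|^2$ by analyzing $\|B_2(x)\|^2$ and $\|B_3(x)\|^2$ separately. For the identity, I would start from the fully discretized scheme \eqref{eq6}. Applying the Gauss-quadrature exactness relations \eqref{eq11} and \eqref{eqI}, the inner integrals $\int_\Lambda \mathcal{I}^\theta_{M_n}(\cdots)\,\mathrm{d}\theta$ and $\int_\Lambda \mathcal{I}^\eta_{M_k}(\cdots)\,\mathrm{d}\eta$ in \eqref{eq6} can be rewritten as discrete inner products $\langle\cdot,\cdot\rangle_{M_n}$ and $\langle\cdot,\cdot\rangle_{M_k}$ (this is exactly how the coefficients $a_{p0}^n$, $b_{p0}^k$ in the preceding display arise). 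Subtracting the discrete scheme satisfied by $\tilde u^n_{M_n}$ from the exact equation \eqref{eq5} satisfied by $\tilde u^n$, and reorganizing the terms, gives precisely $B_1 = B_2 + B_3$ with the three pieces as defined in \eqref{eq14}; here $B_1$ collects the local (current-subinterval) terms, $B_2$ the interpolation defect of the data $\tilde f^n$, and $B_3$ the history (lag) terms.

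Next I would bound each piece. For $B_2$: this is a pure interpolation error for $\tilde f^n$, so Lemma \ref{lem1} gives $\|B_2(x)\|^2 \le c h_n^{2m-1} M_n^{-2m}\|\partial^m_t f\|_{L^2(I_n)}^2$, which is exactly the first term in the asserted bound. For $B_3$: I would split each summand by inserting the intermediate quantity obtained by replacing $\tilde u^k_{M_k}$ with $\tilde u^k$ inside the discrete inner product. One part is the difference $\langle \tilde\kappa^k(\cdot,x), \tilde\psi^k(\cdot,\tilde u^k(\cdot))\rangle_{M_k} - (\tilde\kappa^k(\cdot,x),\tilde\psi^k(\cdot,\tilde u^k(\cdot)))$ together with its $\mathcal{I}^x_{M_n}$-projection, i.e.\ quadrature/interpolation errors controlled again via \eqref{eq11}, \eqref{eqI} and Lemma \ref{lem1}, producing the $h_k^{2m}M_k^{-2m}\|\psi(.,u)\|^2_{H^m(I_k)}$ and $h_n^{2m}M_n^{-2m}\|\psi(.,u)\|^2_{H^1(I_k)}$ contributions (the latter coming from the outer $\mathcal{I}^x_{M_n}$ interpolation, which only costs one derivative since $\tilde\psi^k$ is smooth in $x$ after the change of variables). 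The remaining part is $\langle \tilde\kappa^k(\cdot,x), \tilde\psi^k(\cdot,\tilde u^k(\cdot)) - \tilde\psi^k(\cdot,\tilde u^k_{M_k}(\cdot))\rangle_{M_k}$; applying the Lipschitz hypothesis \eqref{lip}, the boundedness of $\tilde\kappa^k$, stability of the discrete inner product (Cauchy–Schwarz for $\langle\cdot,\cdot\rangle_{M_k}$), and uniform boundedness of $\mathcal{I}^x_{M_n}$ in the relevant norm, this is bounded by $c\gamma^2 h_k\|e_k\|^2$. Summing over $k=1,\dots,n-1$ and using $n h_k \le T$ (or bounding the number of terms) gives the factor $cT$ in front of the history sum.

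The main obstacle I anticipate is controlling the quadrature (aliasing) error terms cleanly: the integrands $\tilde\kappa^k\,\tilde\psi^k$ and $(1+x)\tilde\kappa^n\,\tilde\psi^n$ are not polynomials, so the discrete inner product $\langle\cdot,\cdot\rangle_{M_k}$ does not equal the $L^2$ inner product, and one must estimate $|\langle g,h\rangle_{M_k} - (g,h)|$ for smooth but non-polynomial $g,h$. The standard device is to write $(g,h) - \langle g,h\rangle_{M_k} = (g,h) - (\mathcal{I}_{M_k}g, h) + (\mathcal{I}_{M_k}g,h) - \langle \mathcal{I}_{M_k}g, \mathcal{I}_{M_k}h\rangle_{M_k} + \cdots$ and absorb everything into interpolation errors plus terms where exactness \eqref{eq11} applies, then invoke Lemma \ref{lem1}. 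One also needs that the composition $x \mapsto \tilde\psi^k(\eta, \tilde u^k(\eta))$ inherits $H^m$-regularity in the parameter $x$ and $H^m$ (resp.\ $H^1$) regularity in the integration variable from the hypotheses $\psi:I_n\times H^m(I_n)\to H^m(I_n)$ and $\kappa\in C^m$; tracking how the mesh sizes $h_k$ enter each change of variables (as in Lemma \ref{lem1}) is tedious but routine. A secondary technical point is the uniform boundedness of $\mathcal{I}^x_{M_n}$ in $L^2$, which is not true in general but holds on the image of smooth functions / for the negative-norm or weighted estimates used here; I would cite the relevant stability estimate from \cite{guo, sheng} rather than reprove it.
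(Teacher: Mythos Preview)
Your proposal is correct and follows essentially the same route as the paper: obtain $B_1=B_2+B_3$ by subtracting the discretized equation \eqref{eq13} from the exact equation \eqref{eq12}, bound $\|B_2\|^2$ directly by Lemma~\ref{lem1}, and bound $\|B_3\|^2$ by the add--subtract splitting you describe (the paper carries this out in Appendix~B, decomposing each history summand into $B_{3,1},B_{3,2},B_{3,3}$ and then $B_{4,1},B_{4,2}$). One small refinement: the term $\gamma^2 h_k^{2m}M_k^{-2m}\|\partial_t^m u\|_{L^2(I_k)}^2$ in the final estimate actually arises inside the Lipschitz piece (your $\langle\tilde\kappa^k,\tilde\psi^k(\tilde u^k)-\tilde\psi^k(\tilde u^k_{M_k})\rangle_{M_k}$), because passing from nodal values $\tilde u^k(x_{k,j})-\tilde u^k_{M_k}(x_{k,j})$ to $\|e_k\|$ requires inserting $\mathcal I^\eta_{M_k}\tilde u^k$ and picking up an interpolation error for $\tilde u^k$; and the paper avoids invoking $L^2$-boundedness of $\mathcal I^x_{M_n}$ by rewriting the outer interpolation via the quadrature sum and estimating nodewise, rather than citing an operator-norm bound.
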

\begin{proof}
	The main problem \eqref{asli} can be converted into the interval $ \Lambda $ as Eq. \eqref{eq5}. In the present scheme, we approximate it by Eq. {\eqref{eq6}}. Regarding Eq. \eqref{eq11}, we have from Eqs. \eqref{eq5} and {\eqref{eq6}} that
	\begin{equation}\label{eq12}
	\begin{split}
	\frac{h_n}{4}\Big((1+x) \tilde{\kappa}^n(\sigma(x,.) ,x) ,{{ \tilde{\psi}^{n}}}\big(\sigma(x,.) ,\tilde{u}^n(\sigma(x,.))\big)\Big)=\tilde{f}^n(x)- \sum\limits_{k=1}^{n-1} \frac{h_k}{2}\Big( \tilde{\kappa}^k(. ,x), {{ \tilde{\psi}^{k}}}\big(.,\tilde{u}^{k}(.))\big)\Big),\quad x\in \Lambda,
	\end{split}
	\end{equation}
	and
	\begin{equation}\label{eq13}
	\begin{split}
	\frac{h_n}{4}\mathcal{I}_{M_n}^x\Big\langle(1+x)\tilde{\kappa}^n(\sigma(x,.) ,x) ,{{ \tilde{\psi}^{n}}}\big(\sigma(x,.) ,\tilde{u}_{M_n}^n(\sigma(x,.))\big)\Big\rangle_{{M_n}}=\mathcal{I}_{M_n}^x\tilde{f}^n(x)- \sum\limits_{k=1}^{n-1} \frac{h_k}{2}\mathcal{I}_{M_n}^x\Big\langle \tilde{\kappa}^k(.,x), {{ \tilde{\psi}^{k}}}\big(.,\tilde{u}_{M_k}^{k}(.)\big)\Big\rangle_{{M_k}}.
	\end{split}
	\end{equation}
	By subtracting  \eqref{eq13} from \eqref{eq12}, we have
	\begin{equation}\label{b1}{	B_1(x)=B_2(x)+B_3(x),}
	\end{equation}
	where $B_1(x)$, $B_2(x)$ and $B_3(x)$ are defined in \eqref{eq14}.
	
	In order to obtain an estimate error for the term $B_1$, we need error bounds for $ \Vert B_i\Vert,~ i=2,3.$ First using Lemma \ref{lem1}, we infer that
	\begin{equation}
	\Vert B_2(x)\Vert^2= \Vert \tilde{f}^n(x)-\mathcal{I}_{M_n}^x\tilde{f}^n(x)\Vert^2 
	\leq c h_n^{2m-1}M_n^{-2m}\Vert\partial^{m}_t f(t)\Vert_{L^2(I_n)}^2.
	\end{equation}
With the same argument in \cite{sheng} about $ \Vert B_3 \Vert,$ one can conclude that 	{(see Appendix B)}
	\begin{equation}\label{eqB}
	\begin{aligned}
	\Vert B_3(x)\Vert^2&\leq c T \sum_{k=1}^{n-1}\Big(\gamma^2 h_k \Vert e_k\Vert^2+c h_k^{2m}M_k^{-2m}\Big(\gamma^2\Vert \partial^{m}_t u\Vert^2_{L^2(I_k)}\\&+\Vert \psi(.,u)\Vert^2_{H^m(I_k)}\Big)\Big)+c T h_n^{2m}M_n^{-2m}\sum_{k=1}^{n-1}\Vert \psi(.,u)\Vert^2_{H^1(I_k)}.
	\end{aligned}
	\end{equation}
	So the desire{d} result follows from $ \Vert B_1(x)\Vert^2\leq 2\Big(\Vert B_2(x)\Vert^2+ \Vert B_3(x)\Vert^2\Big).$
\end{proof}
\begin{theorem}\label{te1}
	Assume that the Fr{\'e}chet derivative of the operator $ \mathcal{K}u $ with respect to $ u $ satisfies at
$\vert (\mathcal{K}^\prime u)(t) \vert \geq \alpha > 0,$ 
	then under the hypothesis of the Theorem \ref{theorem11}, for sufficiently small $ h_{\max} $ the following error estimate is obtained
	\begin{equation}
	\begin{aligned}
	\Vert e_n \Vert^2=\Vert  \tilde{u}^n -\tilde{u}^n_{M_n}\Vert^2 &\leq \frac{c}{\delta^2}\exp(c\gamma^2 T^2)\Bigg(h_n^{2m-1}M_n^{-2m}\Vert\partial^{m}_t f\Vert_{L^2(I_n)}^2+ 	{h_n^{2m-1}}M_n^{-2m}\Big(\gamma^2h_n^2\Vert \partial^{m}_t u\Vert^2_{L^2(I_n)}\\
	&~~~+\Vert \psi(.,u)\Vert^2_{H^m(I_n)}\Big)+ h_n^{2m-1}M^{-2m}_n\Vert \psi(.,u_{_M}^{N})\Vert^2_{H^m(I_n)}\\
	& ~~~+T \sum_{k=1}^{n-1}\Big(c h_k^{2m}M_k^{-2m}\Big(\gamma^2\Vert \partial^{m}_t u\Vert^2_{L^2(I_k)}+\Vert \psi(.,u)\Vert^2_{H^m(I_k)}\Big)\\
	&~~~+ h_n^{2m}M_n^{-2m}\sum_{k=1}^{n-1}\Vert \psi(.,u)\Vert^2_{H^1(I_k)}\Big)\Bigg).
	\end{aligned}
	\end{equation}
\end{theorem}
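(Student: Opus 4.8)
The plan is to convert the operator relation $B_1(x)=B_2(x)+B_3(x)$ of Theorem \ref{theorem11} into a recursive bound for $\Vert e_n\Vert^2$ and then close the recursion with the Grönwall inequality of Lemma \ref{lem2}. The starting point is to relate $B_1(x)$ to the error $e_n=\tilde u^n-\tilde u^n_{M_n}$ on the current subinterval. Writing $B_1$ as the difference of the exact Volterra operator evaluated at $\tilde u^n$ and the fully discretized operator evaluated at $\tilde u^n_{M_n}$, I would split it as $B_1 = (\text{discretization error at }\tilde u^n) + (\text{operator difference between }\tilde u^n\text{ and }\tilde u^n_{M_n})$. The first piece is controlled exactly as $B_2$ and $B_3$ were in Theorem \ref{theorem11}, using Lemma \ref{lem1} together with the Lipschitz bound \eqref{lip} and the stability of the Legendre--Gauss interpolation/quadrature in the discrete norm. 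The second piece is where the Fréchet-derivative hypothesis $\vert(\mathcal{K}'u)(t)\vert\ge\alpha>0$ enters: by the mean value / Lipschitz form of $\psi$ in its second argument combined with the non-degeneracy of $\partial_u\psi$ (inherited from Theorem 1 and reflected here in the lower bound on $\mathcal{K}'$), the leading local term $\frac{h_n}{4}\big((1+x)\tilde\kappa^n(\sigma(x,\cdot),x),\tilde\psi^n(\sigma(x,\cdot),\tilde u^n(\sigma(x,\cdot)))-\tilde\psi^n(\sigma(x,\cdot),\tilde u^n_{M_n}(\sigma(x,\cdot)))\big)$ is bounded below, after the change of variables back to $I_n$, by a constant $\delta$ (depending on $\alpha$ and the mesh) times $\Vert e_n\Vert$ — modulo interpolation remainders that are themselves $O(h_n^{2m-1}M_n^{-2m})$. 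This yields an inequality of the shape $\delta^2\Vert e_n\Vert^2 \le c\big(\text{local }O(h_n^{2m-1}M_n^{-2m})\text{ terms}\big) + c\,\Vert B_3\Vert^2 + (\text{small})\Vert e_n\Vert^2$, where the last term, with coefficient $O(h_{\max})$, can be absorbed into the left side precisely because $h_{\max}$ is assumed small.

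Next I would insert the estimate for $\Vert B_3\Vert^2$ from \eqref{eqB} of Theorem \ref{theorem11}, which already contains the history sum $cT\sum_{k=1}^{n-1}\big(\gamma^2 h_k\Vert e_k\Vert^2 + c h_k^{2m}M_k^{-2m}(\gamma^2\Vert\partial_t^m u\Vert^2_{L^2(I_k)}+\Vert\psi(\cdot,u)\Vert^2_{H^m(I_k)})\big)+cTh_n^{2m}M_n^{-2m}\sum_{k=1}^{n-1}\Vert\psi(\cdot,u)\Vert^2_{H^1(I_k)}$. Collecting everything and dividing by $\delta^2$, the resulting inequality has exactly the form required by Lemma \ref{lem2}: $\Vert e_n\Vert^2 \le \alpha_n + \sum_{k=1}^{n-1}\beta_k\Vert e_k\Vert^2 + M_0\Vert e_n\Vert^2$, where $\alpha_n$ gathers all the data-dependent $O(h^{2m-1}M^{-2m})$ and $O(h^{2m}M^{-2m})$ terms (including the $\Vert\psi(\cdot,u_M^N)\Vert^2_{H^m(I_n)}$ contribution coming from expressing the discretized nonlinearity at the computed solution), $\beta_k = c\gamma^2 h_k/\delta^2$, and $M_0 = O(h_{\max}) < 1$. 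Applying Lemma \ref{lem2} then gives $\Vert e_n\Vert^2 \le \frac{\alpha_n}{1-M_0}\exp\!\big(\sum_{k=1}^{n-1}\frac{\beta_k}{1-M_0}\big)$; since $\sum_{k=1}^{n-1}h_k\le T$ we have $\sum_k \beta_k/(1-M_0)\le c\gamma^2 T^2$ up to constants, producing the factor $\exp(c\gamma^2 T^2)$, and $\frac{1}{1-M_0}$ is absorbed into the generic constant $c$, giving the stated estimate with the $1/\delta^2$ prefactor.

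The main obstacle, and the step deserving the most care, is the lower bound linking $B_1$ to $\Vert e_n\Vert$ — that is, showing the local operator difference is \emph{coercive} in $e_n$ with a constant that does not degenerate. This is delicate for two reasons: first, $\psi$ is only assumed Lipschitz (not smooth) in $u$, so one must pass through the Fréchet-derivative lower bound on $\mathcal{K}'$ rather than a naive Taylor expansion, and one must verify that the discretization (the interpolation operators $\mathcal{I}^x_{M_n}$, $\mathcal{I}^\theta_{M_n}$ and the Gauss quadrature replacing the inner integral) perturbs this coercivity only by terms that are higher order in $h_n M_n^{-1}$; second, the factor $(1+x)$ and the change of variable $\sigma(x,\theta)$ introduce weights that must be tracked so that the constant $\delta$ is indeed uniform in $n$. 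Once this coercivity-type lower bound is secured and the smallness of $h_{\max}$ is used to absorb the residual $\Vert e_n\Vert^2$ term, the remainder is a mechanical combination of Lemma \ref{lem1}, the bound \eqref{eqB}, and the Grönwall Lemma \ref{lem2}.
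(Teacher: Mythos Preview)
Your proposal is correct and follows essentially the same route as the paper: use the Fr\'echet-derivative lower bound to extract a coercive control of $e_n$ from the local operator difference, split off interpolation/quadrature residuals, feed in the $B_2,B_3$ bounds of Theorem~\ref{theorem11}, and close with the Gr\"onwall inequality of Lemma~\ref{lem2}. The paper's only notable differences are presentational: it obtains the coercivity \emph{pointwise} via a mean-value theorem for $\mathcal{G}\tilde u(x):=\int_{-1}^x F(x,\tau,\tilde u(\tau))\,\mathrm{d}\tau$, writing $|e_n(x)|\le \delta^{-1}\big(|B_1(x)|+E_1(x)+E_2(x)\big)$ and then taking $L^2$ norms; the discretization residuals $E_1,E_2$ are taken at the \emph{computed} solution $\tilde u^n_{M_n}$ (this is precisely where the term $\Vert\psi(\cdot,u^N_M)\Vert_{H^m(I_n)}$ enters), and the absorbable coefficient on $\Vert e_n\Vert^2$ is $d\gamma^2 h_n^2/\delta^2=O(h_{\max}^2)$ rather than $O(h_{\max})$.
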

\begin{proof}
	For convenience, let
	\begin{equation}
	F(x,\tau,\tilde{u}(\tau)):= \frac{h_{n}}{2}\tilde{\kappa}^{n}{(\tau,x)} {{ \tilde{\psi}^{n}}}(\tau,\tilde{u}(\tau)),\hspace{0.2in}\tau \in (-1,x].
	\end{equation} and $\mathcal{G}{\tilde{u}}(x):=\int_{-1}^{x}F(x,\tau,{\tilde{u}(\tau)}) \mathrm{d}\tau.$
	Under the mean value theorem \cite[p. 229]{atkinson}, we have
	\begin{equation}
	\int_{-1}^{x}F(x,\tau,\tilde{u}^{n}(\tau)) \mathrm{d}\tau  - \int_{-1}^{x}F(x,\tau,\tilde{u}_{M_n}^{n}(\tau))\mathrm{d}\tau= \mathcal{G}^\prime ({\xi_x})\big(\tilde{u}^{n}(x) - \tilde{u}_{M_n}^{n}(x)\big),
	\end{equation}
	where $ {\xi_x}\in (\min\lbrace \tilde{u}^{n},\tilde{u}_{M_n}^{n}\rbrace,\max\lbrace \tilde{u}^{n},\tilde{u}_{M_n}^{n}\rbrace ) $ and $ \mathcal{G}^\prime $ denotes the Fr{\'e}chet derivative, namely, 
	\[ \mathcal{G}^\prime (\tilde{u}) h(x)=\int_{-1}^x \tilde{\kappa}^{n}(\tau,x) \frac{\partial {{{ \tilde{\psi}^{n}}}}(\tau,\tilde{u}(\tau)) }{\partial \tilde{u}} h(\tau) \mathrm{d}\tau. \]
	{ It is seen that 
		\[
		\mathcal{K}'(u)h(t) = \int_{0}^{t}k(t,s)\frac{\partial}{\partial u}\psi(s,u(s))h(s)\mathrm{d}s,
		\]
		so the assumption of $\vert (\mathcal{K}^\prime u) (t)\vert \gg 0$ results that  $\delta:=\vert \mathcal{G}^\prime (\tilde{u}) \vert \gg 0.$
	Therefore, in order to obtain an upper bound for the error, one can write}
	\begin{equation}\label{eq15}
	\Big \vert\tilde{u}^{n}(x) - \tilde{u}_{M_n}^{n}(x) \Big \vert \leq \frac{1}{\delta} \Big \vert\int_{-1}^{x}F(x,\tau,\tilde{u}^{n}(\tau) \mathrm{d}\tau  - \int_{-1}^{x}F(x,\tau,\tilde{u}_{M_n}^{n}(\tau))\mathrm{d}\tau\Big|.
	\end{equation}
	Since
	\begin{equation}
	\int_{-1}^{x}F(x,\tau  ,\tilde{u}^{n}(\tau)) \mathrm{d}\tau = \frac{h_{n}}{4}\int_\Lambda (1 + x)\tilde{\kappa}^{n}({\sigma (x,\eta),x}){{ \tilde{\psi}^{n}}}\Big(\sigma(x,\eta),\tilde{u}^{n}( \sigma(x,\eta))\Big)\mathrm{d}\eta,
	\end{equation}
	then from \eqref{eq15}, we deduce that
	\begin{equation}
	\begin{aligned}
	\Big \vert e_n(x)\Big \vert=\Big \vert \tilde{u}^{n}(x) - \tilde{u}_{M_n}^{n}(x) \Big \vert &\leq \frac{1}{\delta} \Big \vert \frac{h_{n}}{4}(1 + x)\Big(\int_{ \Lambda }\tilde{\kappa}^{n}({{\sigma (x,\eta),x) \tilde{\psi}^{n}}}\big(\sigma(x,\eta),\tilde{u}^{n}( \sigma(x,\eta))\big)\mathrm{d}\eta  \\&\quad-\int_{ \Lambda }\tilde{\kappa}^{n}({{\sigma (x,\eta),x) \tilde{\psi}^{n}}}\big(\sigma(x,\eta),\tilde{u}_{M_n}^{n}( \sigma(x,\eta))\big)\mathrm{d}\eta \Big)\Big \vert\\&	
	\leq \frac{1}{\delta}\Big(\vert B_{1}(x)\vert + E_{1}(x) + E_{2}(x)\Big),
	\end{aligned}
	\end{equation}
	where $ B_1(x) $ is defined in \eqref{eq14} and
	\begin{equation}\label{E12}
	\begin{aligned}
	E_1(x)&=\frac{h_{n}}{4}\Big \vert  \mathcal{I}_{M_n}^{x}\Bigg((1 + x)\int_{\Lambda} \mathcal{I}_{M_n}^{\eta}\Big(\tilde{\kappa}^n ({{\sigma (x,\eta),x) \tilde{\psi}^{n}}}\big(\sigma(x,\eta),\tilde{u}^{n}_{M_n}( \sigma(x,\eta))\big)\Big)\mathrm{d}\eta{\Bigg)}\\&- (1 + x)\int_{\Lambda} \mathcal{I}_{M_n}^{\eta} \Big(\tilde{\kappa}^n ({{\sigma (x,\eta),x) \tilde{\psi}^{n}}}\big(\sigma(x,\eta),\tilde{u}^{n}_{M_n}( \sigma(x,\eta))\big)\Big)\mathrm{d}\eta\Big \vert, \\
	E_2(x)&=\frac{h_{n}}{4}\Big \vert (1 + x)\Big(\int_{\Lambda} \mathcal{I}_{M_n}^{\eta} \Big(\tilde{\kappa}^{n}({{\sigma (x,\eta),x) \tilde{\psi}^{n}}}\big(\sigma(x,\eta),\tilde{u}^{n}_{M_n}( \sigma(x,\eta))\big)\Big)\mathrm{d}\eta\\&-
	\int_\Lambda \tilde{\kappa}^{n}({{\sigma (x,\eta),x) \tilde{\psi}^{n}}}\big(\sigma(x,\eta),\tilde{u}^{n}_{M_n}( \sigma(x,\eta))\big)\mathrm{d}\eta\Big) \Big \vert.
	\end{aligned}
	\end{equation}
	Thus 
	\begin{equation}
	\Vert e_n(x)\Vert^2\leq \frac{3}{\delta^2}\Big( \Vert B_1(x) \Vert^2+ \Vert E_1(x) \Vert^2+ \Vert E_2(x)\Vert^2\Big).
	\end{equation}
	In order to estimate the term $ \Vert e_n\Vert^2 $, we need the error bound for $ \Vert E_i\Vert,$ for $i=1,2. $ 
	Owing to Lemma \ref{lem1} and the same discussion in \cite{sheng} 	{(see Appendix A)}, we have
	\begin{equation}\label{eqE1}
	\begin{aligned}
	\Vert E_1 \Vert^2&=	{ \frac{h_{n}^2}{4}\Bigg\Vert(\mathcal{I}_{M_n}^{x}-\mathcal{I})\Bigg(\frac{1 + x}{2}\int_{\Lambda} \mathcal{I}_{M_n}^{\eta}\Big(\tilde{\kappa}^n (\sigma (x,\eta),x){{ \tilde{\psi}^{n}}}\big(\sigma(x,\eta),\tilde{u}^{n}_{M_n}( \sigma(x,\eta))\big)\Big)\mathrm{d}\eta\Bigg)\Big \Vert^2}\\&\leq \frac{d\gamma^2h_n^2}{3}\Big \Vert e_n\Big \Vert^2+c h_n^{2m-1}M_n^{-2m}\Big(\gamma^2h_n^2\Big \Vert \partial^{m}_t u\Big \Vert^2_{L^2(I_n)}+\Big \Vert \psi(.,u)\Big \Vert^2_{H^m(I_n)}\Big),
	\end{aligned}
	\end{equation}
	and
	\begin{equation}\label{eqE2}
	\begin{aligned}
	\Vert E_2\Vert^2&={\frac{h_{n}^2}{4}\Big \Vert\frac{1 + x}{2} \int_{\Lambda} (\mathcal{I}_{M_n}^{\eta}-\mathcal{I}) \Big(\tilde{\kappa}^{n}(\sigma (x,\eta),x){{ \tilde{\psi}^{n}}}\big(\sigma(x,\eta),\tilde{u}_{M_n}^{n}( \sigma(x,\eta))\big)\Big)\mathrm{d}\eta\Big\Vert^2}\\&\leq c h_n^{2m+1}M^{-2m}_n\Big \Vert \psi(.,u_{_M}^{N})\Big \Vert^2_{H^m(I_n)},
	\end{aligned}
	\end{equation}
	where the constant $ d $ depends on the term $ \max\limits_{(s,t)\in \Omega}\vert \kappa(s,t)\vert $ and $ \gamma $ is the Lipschitz constant. Consequently,
	\begin{equation}
	\begin{aligned}
	(1-\frac{d\gamma^2h_n^2}{\delta^2}) \Vert e_n \Vert^2 &\leq \frac{1}{\delta^2}\Bigg( c h_n^{2m-1}M_n^{-2m}\Vert\partial^{m}_t f\Vert_{L^2(I_n)}^2+c T \sum_{k=1}^{n-1}\Big(\gamma^2 h_k \Vert e_k\Vert^2+c h_k^{2m}M_k^{-2m}\Big(\gamma^2\Vert \partial^{m}_t u\Vert^2_{L^2(I_k)}\\&+\Vert \psi(.,u)\Vert^2_{H^m(I_k)}\Big)\Big)+c T h_n^{2m}M_n^{-2m}\sum_{k=1}^{n-1}\Vert \psi(.,u)\Vert^2_{H^1(I_k)}+c h_n^{2m-1}M_n^{-2m}\Big(\gamma^2h_n^2\Vert \partial^{m}_t u\Vert^2_{L^2(I_n)}\\&+\Vert \psi(.,u)\Vert^2_{H^m(I_n)}\Big)+c h_n^{2m+1}M^{-2m}_n\Vert \psi(.,u_{_M}^{N})\Vert^2_{H^m(I_n)}\Bigg).
	\end{aligned}
	\end{equation}
	We assume that $ h_{\max} $ is sufficiently small such that
	\[ \frac{d\gamma^2h_{ \max}^2}{\delta^2}\leq \beta <1,\]
now using Lemma \ref{lem2}, we have
	\begin{equation}
	\begin{split}
	\Vert e_n \Vert^2 &\leq \frac{c}{\delta^2}\exp(c\gamma^2 T^2)\Bigg(h_n^{2m-1}M_n^{-2m}\Vert\partial^{m}_t f\Vert_{L^2(I_n)}^2+ h_n^{2m-1}M_n^{-2m}\Big(\gamma^2h_n^2\Vert \partial^{m}_t u\Vert^2_{L^2(I_n)}\\&+\Vert \psi(.,u)\Vert^2_{H^m(I_n)}\Big)+ h_n^{2m-1}M^{-2m}_n\Vert \psi(.,u_{_M}^{N})\Vert^2_{H^m(I_n)}+T \sum_{k=1}^{n-1}\Big (h_k^{2m}M_k^{-2m}\Big(\gamma^2\Vert \partial^{m}_t u\Vert^2_{L^2(I_k)}\\&+\Vert \psi(.,u)\Vert^2_{H^m(I_k)}\Big)\Big)+ h_n^{2m}M_n^{-2m}\sum_{k=1}^{n-1}\Vert \psi(.,u)\Vert^2_{H^1(I_k)}\Bigg),
	\end{split}
	\end{equation}
	hence, the desired result is obtained.
\end{proof}
\begin{theorem}\label{thm8}
Assume that $ u(t) $ be the exact solution of Eq. \eqref{asli} and $ u_{_M}^N(t) $ be the global approximate solution obtained from Eq. \eqref{unm}. Under the hypothesis of Theorem \ref{te1}, the following error estimate can be derived as
\begin{equation}
\begin{aligned}
\Vert u-u_{_M}^N\Vert_{L^2(I)}&\leq \frac{c}{\delta^2}\exp(c\gamma^2 T^2)h_{\max}^{m}M_{\min}^{-m}\Big( \Vert\partial^{m}_t f\Vert_{L^2(I)}+\gamma (1+T) \Vert \partial^{m}_t u\Vert_{L^2(I)}\\&+T\Vert \psi(.,u)\Vert_{H^m(I)}+\Vert \psi(.,u^N_{_M})\Vert_{H^m(I)} \Big).
\end{aligned}
\end{equation}
\end{theorem}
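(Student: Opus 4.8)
The plan is to obtain the global estimate simply by summing the local bounds of Theorem~\ref{te1} over the mesh $I_h$. Two structural facts make this possible: $I_h$ partitions $I$, and by the construction \eqref{unm} the global approximant $u_{_M}^N$ restricted to $I_n$ is exactly the $n$-th local polynomial, so under the affine change of variables $t=(h_nx+t_{n-1}+t_n)/2$ one has the elementary identity
\[
\Vert u-u_{_M}^N\Vert_{L^2(I)}^2=\sum_{n=1}^N\Vert u^n-u^n_{M_n}\Vert_{L^2(I_n)}^2=\sum_{n=1}^N\frac{h_n}{2}\,\Vert\tilde u^n-\tilde u^n_{M_n}\Vert^2=\sum_{n=1}^N\frac{h_n}{2}\,\Vert e_n\Vert^2 .
\]
Because the right-hand side of the estimate in Theorem~\ref{te1} no longer contains any $e_k$ --- the discrete Gr\"onwall inequality of Lemma~\ref{lem2} having already been applied there, which is why the factor $\exp(c\gamma^2T^2)$ is present --- it may be substituted directly, and the remainder is pure bookkeeping.

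First I would use the monotonicity bounds $h_n\le h_{\max}$ and $M_n\ge M_{\min}$ to replace each factor $h_n^{2m-1}M_n^{-2m}$, after multiplication by the Jacobian $h_n/2$, by $\tfrac12 h_n^{2m}M_n^{-2m}\le\tfrac12 h_{\max}^{2m}M_{\min}^{-2m}$; the terms carrying an extra $\gamma^2h_n^2$ produce a surplus power $h_{\max}^2$, which is harmless since $h_{\max}$ is bounded (say $h_{\max}\le T$) and is absorbed into the constant. Next, for the history sums $T\sum_{k=1}^{n-1}(\cdots)$ I would invoke the triangular-sum inequality
\[
\sum_{n=1}^N h_n\sum_{k=1}^{n-1}a_k\le\Big(\sum_{n=1}^N h_n\Big)\Big(\sum_{k=1}^N a_k\Big)=T\sum_{k=1}^N a_k ,\qquad a_k\ge0 ,
\]
which is precisely what upgrades the single $T$ inside Theorem~\ref{te1} to the extra power of $T$ in the present statement and, crucially, keeps the constant independent of $N$. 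Finally, the piecewise norms reassemble: $\sum_{n=1}^N\Vert\partial_t^m u\Vert_{L^2(I_n)}^2=\Vert\partial_t^m u\Vert_{L^2(I)}^2$, $\sum_{n=1}^N\Vert\psi(\cdot,u)\Vert_{H^m(I_n)}^2=\Vert\psi(\cdot,u)\Vert_{H^m(I)}^2$, and likewise for $f$ and for $\psi(\cdot,u_{_M}^N)$, while $\Vert\psi(\cdot,u)\Vert_{H^1(I_k)}^2\le\Vert\psi(\cdot,u)\Vert_{H^m(I_k)}^2$ because $m\ge1$, so the $H^1$-term merges into the $H^m$-term.

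Collecting the pieces, the two $\gamma^2$-contributions --- the ``current'' one, of order $\gamma^2h_{\max}^{2m}M_{\min}^{-2m}\Vert\partial_t^m u\Vert_{L^2(I)}^2$, and the ``history'' one, of order $\gamma^2T^2h_{\max}^{2m}M_{\min}^{-2m}\Vert\partial_t^m u\Vert_{L^2(I)}^2$ --- combine into a term with the factor $\gamma^2(1+T)^2$, which accounts for the $\gamma(1+T)$ in the statement; the $\psi(\cdot,u)$-contributions combine into a multiple of $T^2h_{\max}^{2m}M_{\min}^{-2m}\Vert\psi(\cdot,u)\Vert_{H^m(I)}^2$; and the $f$- and $\psi(\cdot,u_{_M}^N)$-contributions retain $O(1)$ coefficients. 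This yields
\[
\Vert u-u_{_M}^N\Vert_{L^2(I)}^2\le\frac{c}{\delta^2}\exp(c\gamma^2T^2)\,h_{\max}^{2m}M_{\min}^{-2m}\Big(\Vert\partial_t^m f\Vert_{L^2(I)}^2+\gamma^2(1+T)^2\Vert\partial_t^m u\Vert_{L^2(I)}^2+T^2\Vert\psi(\cdot,u)\Vert_{H^m(I)}^2+\Vert\psi(\cdot,u_{_M}^N)\Vert_{H^m(I)}^2\Big),
\]
and taking square roots together with $\sqrt{a_1^2+a_2^2+a_3^2+a_4^2}\le a_1+a_2+a_3+a_4$ for nonnegative $a_i$ gives the claimed bound.

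The only genuinely delicate point is the triangular history sum: one must ensure that $\sum_n h_n\sum_{k<n}a_k$ is controlled by $T\sum_k a_k$ rather than by an $N$-dependent (hence mesh-dependent) quantity, for otherwise the error constant would degrade under refinement. Everything else --- the Jacobian factor, the monotonicity replacements, the absorption of bounded powers of $h_{\max}$, and the additivity of the (weighted) Sobolev norms over the partition --- is routine, provided one first fixes the convention that $u_{_M}^N|_{I_n}$ is identified with the $n$-th local approximant so that the opening rescaling identity holds verbatim.
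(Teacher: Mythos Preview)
Your proposal is correct and follows essentially the same route as the paper: both start from the rescaling identity $\Vert u-u_{_M}^N\Vert_{L^2(I)}^2=\tfrac12\sum_{n=1}^N h_n\Vert e_n\Vert^2$, substitute the local bound of Theorem~\ref{te1}, replace $h_n,M_n$ by $h_{\max},M_{\min}$, use the triangular-sum estimate $\sum_n h_n\sum_{k<n}a_k\le T\sum_k a_k$ for the history terms, and reassemble the piecewise Sobolev norms over $I$. Your write-up is in fact slightly more careful than the paper's in making explicit the absorption of surplus $h_{\max}$ powers and the merging of the $H^1$ term into the $H^m$ term.
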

\begin{proof}
The global convergence error of the approximate solution $ u_{_M}^N(t) $ which is given by 
\[ u_{_M}^N(t)\rvert_{t\in I_n}=\tilde{u}^n_{M_n}(x)\Big\rvert_{{x=}\frac{2t-t_{n-1}-t_n}{h_n}}, \quad 1\leq n\leq N, \]
and the exact solution  $ u(t) $ which is fulfilled in
\[ u(t)\rvert_{t\in I_n}=\tilde{u}^n(x)\Big\rvert_{{x=}\frac{2t-t_{n-1}-t_n}{h_n}}, \quad 1\leq n\leq N,\]
can be easily obtained using Theorem \ref{te1}  and the following formula
\[\Vert u-u_{_M}^N\Vert^2_{L^2(I)}=\frac{1}{2}\sum_{n=1}^{N}h_n\Vert e_n\Vert^2.\]
Therefore,
	\begin{equation}\label{uuu}
	\begin{aligned}
	\Vert   u-u_{_M}^N\Vert^2 &\leq \frac{c}{\delta^2}\exp(c\gamma^2 T^2)\sum_{n=1}^{N}\Big(h_n^{2m}M_n^{-2m}\Vert\partial^{m}_t f\Vert_{L^2(I_n)}^2+ h_n^{2m}M_n^{-2m}\Big(\gamma^2h_n^2\Vert \partial^{m}_t u\Vert^2_{L^2(I_n)}\\&+\Vert \psi(.,u)\Vert^2_{H^m(I_n)}\Big)+ h_n^{2m}M^{-2m}_n\Vert \psi(.,u_{_M}^{N})\Vert^2_{H^m(I_n)}+Th_n \sum_{k=1}^{n-1}\Big (h_k^{2m}M_k^{-2m}\Big(\gamma^2\Vert \partial^{m}_t u\Vert^2_{L^2(I_k)}\\&+\Vert \psi(.,u)\Vert^2_{H^m(I_k)}\Big)\Big)+ h_n^{2m+1}M_n^{-2m}\sum_{k=1}^{n-1}\Vert \psi(.,u)\Vert^2_{H^1(I_k)}\Big).
	\end{aligned}
	\end{equation}
All terms of the above error bound could be simplified  using $ h_{\max} $ and $ M_{\min} $ as follows
\[\sum_{n=1}^{N}h_{n}^{2m}M_{n}^{-2m}\Vert\partial^{m}_t f\Vert_{L^2(I_n)}^2\leq h_{\max}^{2m} M_{min}^{-2m}\Vert\partial^{m}_t f\Vert_{L^2(I)}^2.\]
similarly,
 \[\sum_{n=1}^{N} \gamma^2h_{n}^{2m+2}M^{-2m}_n {\Vert  \partial^{m}_t  u\Vert ^2_{L^2(I_n)}} \leq \gamma^2 h_{\max}^{2m+2} M_{\min}^{-2m}\Vert \partial^{m}_t u \Vert_{L^2(I)}^2,\]
Also the following inequalities can be proved 
\[\sum_{n=1}^{N} h_n^{2m}M^{-2m}_n\Vert \psi(.,u)\Vert^2_{H^m(I_n)}\leq h_{\max}^{2m} M_{\min}^{-2m}\Vert\psi(.,u)\Vert^2_{H^m(I)},\]
and
\[\sum_{n=1}^{N} h_n^{2m}M^{-2m}_n\Vert \psi(.,u_{_M}^{N})\Vert^2_{H^m(I_n)}\leq h_{\max}^{2m} M_{\min}^{-2m}\Vert\psi(.,u_{_M}^{N})\Vert^2_{H^m(I)}.\]
Furthermore, we  obtain
\[\begin{aligned}\sum_{n=1}^{N}T h_n \sum_{k=1}^{n-1}h_k^{2m}M_k^{-2m}\gamma^2\Vert \partial^{m}_t u\Vert^2_{L^2(I_k)}&\leq Th_{\max}^{2m}M_{\min}^{-2m}\gamma^2\sum_{n=1}^{N} h_n \sum_{k=1}^{N}\Vert \partial^{m}_t u\Vert^2_{L^2(I_k)}\\&\leq
\gamma^2 T^2 h_{\max}^{2m} M_{\min}^{-2m}\Vert\partial^{m}_t u\Vert_{L^2(I)}^2,	\end{aligned}\]
and
\[\begin{aligned}\sum_{n=1}^{N}T h_n \sum_{k=1}^{n-1}h_k^{2m}M_k^{-2m}\Vert\psi(.,u)\Vert^2_{H^m(I_k)}&\leq Th_{\max}^{2m}M_{\min}^{-2m}\sum_{n=1}^{N} h_n \sum_{k=1}^{N}\Vert\psi(.,u)\Vert^2_{H^m(I_k)}\\&\leq
 T^2 h_{\max}^{2m} M_{\min}^{-2m}\Vert\psi(.,u)\Vert^2_{H^m(I)}.	\end{aligned}\]
Moreover, the last term can be bounded as
\[\begin{aligned}\sum_{n=1}^{N}h_n^{2m+1}M_n^{-2m}\sum_{k=1}^{n-1}\Vert \psi(.,u)\Vert^2_{H^1(I_k)}&\leq h_{\max}^{2m}M_{\min}^{-2m}\sum_{n=1}^{N} h_n \sum_{k=1}^{N}\Vert \psi(.,u)\Vert^2_{H^1(I_k)}\\&\leq
T h_{\max}^{2m} M_{\min}^{-2m}\Vert \psi(.,u)\Vert^2_{H^1(I)}.	\end{aligned}\]
Consequently, the combination of the above error bounds for Eq. \eqref{uuu} leads to the desired result.
\end{proof}
\section{Numerical results}\label{numerical experiments}
The numerical experiments are used to illustrate the efficiency of the $hp$-collocation method for the first kind Hammerstein integral equations. 
The experiments are implemented in $\textsl{Mathematica}^{\circledR}$ software platform. The programs are executed on a PC with 3.50 GHz Intel(R) Core(TM) i5-4690K processor.
In order to analyze the method, the following notations are introduced:

\[E^{N}_{1}(T)=\left(\sum\limits_{k=1}^{N}\sum\limits_{j=0}^{M_k}\frac{h_k}{2}w_{k,j}\big(u^k(x_{k,j})-u_{M_k}^k(x_{k,j})\big)^2 \right)^{\frac{1}{2}},\]
\[E^{N}_{2}(T)=\max\limits_{1\leq k \leq N}\big\vert u(t_{k})-u_{_M}^{N}(t_k)\big\vert,\]
\[E^{N}_{3}(T)=\max\limits_{t\in I}\big\vert u(t)-u_{_M}^{N}(t))\big\vert.\]

The discrete $ L^{2}$-norm error is denoted by $E^{N}_{1}(T)$, also the maximum of absolute error at the mesh knots is shown by $ E^{N}_{2}(T) $ and finally $ E^{N}_{3}(T) $ indicates  the infinite norm. Furthermore, the order of convergence $\rho_{N}$ is defined by $\log_{2}\big(\dfrac{E^{N}_{3}(T)}{E^{	{2N}}_{3}(T)}\big) $.	{
 The relation of the theoretical order of convergence stated in Theorem \ref{thm8} and  $\rho_{N}$ can be derived as 
\begin{equation}
\label{ro}
\rho_N=\log_{2}\big(\dfrac{E^{N}_{3}(T)}{E^{2N}_{3}(T)}\big)\approx \log_{2}\big(\dfrac{ch_{\max}^mM_{\min}^{-m}}{c(\dfrac{h_{\max}}{2})^mM_{\min}^{-m}}\big)=\log_2 2^m=m.
\end{equation}
One can utilize this criterion  to check the order  of convergence in practice based on the continuous injection between $L^2(\Omega)$ and $L^{\infty}(\Omega)$ \cite{brezis}.

 Let $L$ denotes the number of unknown coefficients, in this regard we have $L=\sum\limits_{n=1}^N(M_n+1)$ for the $hp$-collocation method and in a specific case, if all degrees of polynomials $  M_n $ are equal, i.e. $ M_n=M^*$, for $ n=1, \dots, N,$ then according to relation \eqref{unm}, $ L=(M^*+1)\times N. $ For convenience, we denote $ M:=M^*+1,$ and so $ L=M\times N. $} 

For the solution of the nonlinear systems which arise in the formulation of the method, one may use the Newton iteration method which needs an initial guess.
In these examples, the initial points are selected by an algorithm based on the steepest descent method.
\begin{remark}\label{rem3}
	In \cite{patterson}, an optimal control problem is solved numerically using a mesh refinement method based on collocation at Legendre-Gauss-Radau points. A relative error estimate is defined and then it is used to choose increasing the degree of polynomials or refinement of the mesh-size.  The described  scheme is called ``adaptive $hp$-collocation method". If we provide some facilities to  modify the degree of polynomials in each subinterval or change the mesh-size during the approximation procedure then the desired error could be fulfilled. 
\end{remark}
\begin{example}(\cite{ kauthenbrunner, liang2018})\label{ex9}
The following linear Volterra integral equation of first kind is considered
	\begin{equation*}
	\int_{0}^t \exp(-ts)u(s) \mathrm{d}s=\frac{\exp(-t(t+1))\sin(t)-(t+1)\cos(t)\exp(-t(t+1))+t+1}{1+(t+1)^2}, \quad t\in [0,1],
	\end{equation*}
	with the exact solution $ u(t)=\exp(-t)\cos(t).$
	
	This equation has been solved by piecewise polynomial collocation method \cite{kauthenbrunner} and a type of multi-step collocation method \cite{liang2018}. { According to Remark \ref{rem2}, we derive the numerical solution by solving linear system \eqref{eqr}.}
	 Table 1 reports the obtained error results for $hp$- and multi-step collocation methods {for diverse values of  $ N $ with  fixed degree $ M_n = M^*=3 $ and fixed step size $ h_n = h=\frac{1}{N} $ for $n = 1, \dots, N $.} By comparing the results,  we can conclude that $hp$-collocation  gives  better results. 
In addition, the best result reported in \cite{kauthenbrunner} with $ M=4$ and $ N=256 $ has the absolute error around $ 10^{-10} $ while the present scheme achieves the error $ 10^{-14} $.
Figure 1 shows the superiority of  $ hp$-version against  $ h$- and $p$-versions. The  figure on the left with fixed {$ M=M^*+1=4$  }  and different values of $ N $ depicts $ h$-version. Also, the  figure on the right demonstrates $p$-version for  each fixed $ N=1,2,4,8 $ {when $ h_n=h=\frac{1}{N}$   and various values $ M_n=M^* $ for $ n=1, \dots, N $} which can be seen as an $ hp$-version method.
 {As mentioned in the hypothesis of Theorem \ref{thm8}, $ m \leq M_{\min}+1 $, hence $ m\leq 4$. On the other hand, similar to the presented method in \cite{liang2018}, the convergence order of the present approach is $ O(h^m)$ when $h_{\max}=h$. This claim is numerically verified by relation \eqref{ro} and Table 1 in column $\rho_N.$ }
	 \end{example}
\begin{table}[ht]\label{tab1}
\begin{center}
\begin{small}
\caption{A comparison between multi-step collocation \cite{liang2018} and $hp$-collocation methods for Example {\ref{ex9}} in terms of $E_{3}^N(1)$ { for different $ N $ with fixed $ M_n = M^*=3 $ and $ h_n = h=\frac{1}{N} $ for $ n = 1, \dots, N $.}}\vspace*{0.1in}
\begin{tabular}{ccccccccccc}
\hline
\noalign{\smallskip}
$N$&& multi-step method && $ \rho_N $&& $hp$-collocation method&& $ \rho_N $\\
\noalign{\smallskip}\hline\noalign{\smallskip}
$2^1$&&$9.5385\textrm{e}-04$&&&&$2.4708\textrm{e}-05$&& \\
$2^2$&&$4.8403\textrm{e}-05$&& $ 4.3006 $&& $1.7028\textrm{e}-06$&& $ 3.8589 $\\
$2^3$ &&$2.8037\textrm{e}-06$&& $4.1096 $&&$1.1137\textrm{e}-07$&&$ 3.9345 $\\
$2^4$&&$1.6961\textrm{e}-07$&& $ 4.0470$&& $7.1140\textrm{e}-09$&&$3.9685  $\\
$2^5$&&$1.0443\textrm{e}-08$&& $ 4.0215 $&&$4.4938\textrm{e}-10$&&$ 3.9846 $\\
$2^6$&&$6.4788\textrm{e}-10$&& $4.0107$&& $2.8104\textrm{e}-11$&&$3.9990  $\\
$2^7$&&$4.2577\textrm{e}-11$&& $ 3.9276 $&& $1.7397\textrm{e}-12$&&$4.0138  $\\
\hline
&&&& $3.9276$&&&&$3.8589  $\\
\noalign{\smallskip}\hline
\end{tabular}
\end{small}
\end{center}
\end{table}
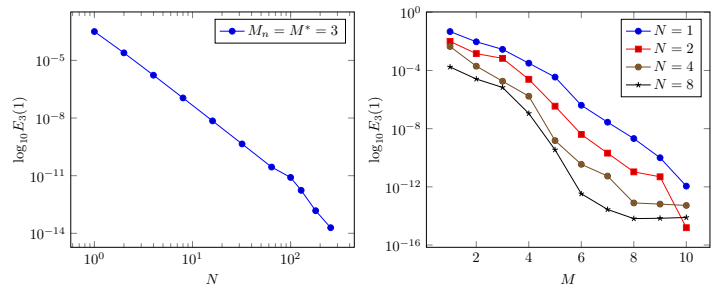
\begin{figure}[h]
\centering
\begin{tikzpicture}
[scale=0.55, transform shape]
\begin{axis}[
xlabel={$ N $},ylabel={${\log}_{10}E_3(1)$},xmode=log, ymode=log, legend entries={$ M_n=M^*=3 $}, legend pos=north east]
\addplot coordinates {
(1,3.17E-04)
(2,2.47E-05)
(4, 1.70E-06)
(8,1.11E-07)
(16,7.11E-09)
(32,4.49E-10)
(64,2.81E-11)
(100,8.10E-12)
(128,1.73E-12)
(180,1.50E-13)
(256,1.96E-14)
};
\end{axis}
\end{tikzpicture}
\begin{tikzpicture}
[scale=0.55, transform shape]
\begin{axis}[
xlabel={$M$},ylabel={${\log}_{10}E_3(1)$}, ymode=log, legend entries={$N=1$,$N=2$,$N=4$,$N=8$}, legend pos=north east]
\addplot coordinates {
(1,4.74E-02)
(2,9.39E-03)
(3,2.83E-03)
(4,3.17E-04)
(5, 3.51E-05)
(6,4.07E-07)
(7,2.78E-08)
(8,2.09E-09)
(9,1.00E-10)
(10,1.12E-12)
};
\addplot coordinates {
(1,1.00E-02)
(2,1.44E-03)
(3,6.83E-04)
(4,2.47E-05)
(5, 3.51E-07)
(6,4.04E-09)
(7,2.08E-10)
(8,1.07E-11)
(9,4.90E-12)
(10,1.55E-15)
};\addplot coordinates {
(1,4.44E-03)
(2,1.98E-04)
(3,1.83E-05)
(4,1.70E-06)
(5, 1.51E-09)
(6,3.53E-11)
(7,5.5E-12)
(8,7.81E-14)
(9,6.50E-14)
(10,5.26E-14)
};\addplot coordinates {
(1,1.74E-04)
(2,2.59E-05)
(3,6.83E-06)
(4,1.11E-07)
(5, 3.51E-10)
(6,3.34E-13)
(7,2.78E-14)
(8,6.54E-15)
(9,6.90E-15)
(10,7.83E-15)
};
\end{axis}
\end{tikzpicture}
\caption{Plots of the $ E_3^{N}(1) $ error in logarithmic scale for the   $ h$-, $p$- and $ hp$-version collocation methods for Example \ref{ex9}.}%
\label{f1}
\end{figure}
\begin{example}(\cite{ma2016, singh2016})\label{ex101}
In this example, we apply the methods to the following nonlinear Volterra integral equation of the first kind
	\begin{equation*}
	\int_{0}^t \big(\sin(t-s)+1\big)\cos(u(s))\mathrm{d}s=f(t), \quad t\in [0,1],
	\end{equation*}
	where $ f(t) $ is chosen such that $ u(t)=t$ be the exact solution. Due to the invertibility of the kernel, this equation can be converted into a second kind integral equation.  Using this idea, two numerical schemes based on Sinc-Nystr\"{o}m and Haar wavelet methods  are discussed in \cite{ma2016} and \cite{ singh2016}, respectively.
{	Table 2 and 3 show the results with comparisons.
	 Table 2 reports the comparison of Haar wavelet \cite{ma2016} and $ hp$-collocation methods with the same value of $ L $. The present scheme runs for various $ N$ with fixed step size $ h_n=h=\frac{1}{N} $, uniform mode $ M=M_n+1=2$ for $ n=1, \dots ,N$. As expected from (\ref{ro}), $ \rho_N $ is approximately equal to $ m\leq M_{\min}+1=2 $.
	 In Table 3, the best results of SE or DE Sinc-Nyst\"{o}m and $ hp$-collocation methods are provided to show the efficiency of these two algorithms. We take different $ M $ and $ N $ to achieve the best result $ u_{M}^N(t) $ with the same or near value of the column $ L $ related to Sinc-Nystr\"{o}m method. For instance, the pairs $ (1,9), ~(9,1),~ (3,3) $ have the same unknown coefficients $ L=9.$ The infinity error of these pairs are $7.90\textrm{e}-2,~  2.77\textrm{e}-12,~ 6.99\textrm{e}-08  $, respectively. Hence, the best result is for $ (9,1),$ i.e., $ u_{9}^1(t). $
	 }
	\end{example} 

	\begin{table}[ht]
		{
	\begin{center}
	\begin{small}
	\caption{The comparison of Haar wavelet \cite{singh2016} and $hp$-collocation method  for different $ N $ with fixed $ M=M_n+1=2$ and $h=h_n=\frac{1}{N}$ for $ n=1, \dots ,N$ in the sense of $E_{3}^{N}(1)$ error for Example \ref{ex101}.}\vspace*{0.1in}
	\begin{tabular}{ccccc|cccccc}
	\hline
	\noalign{\smallskip}
	$J$& $ L=2^{J+1} $ && Haar wavelet &&$  N $& $ L=MN$ && $ hp-$collocation
	\\\noalign{\smallskip}
	\hline
	\noalign{\smallskip}\hline\noalign{\smallskip}
	$2$&$8$&&$1.2\textrm{e}-03$&&$ 2^2 $&$8 $&& $ 1.62\textrm{e}-04$\\
	$3$&$16$&&$3.1\textrm{e}-04$&&$2^3$&$16 $&& $ 1.31\textrm{e}-05$\\
	$4$&$32$&&$8.0\textrm{e}-05$ &&$ 2^4 $&$32 $&& $ 2.82\textrm{e}-06$\\
	$5$ &$64$&&$2.0\textrm{e}-05$&&$ 2^5 $&$64 $&& $ 4.98\textrm{e}-07$\\
	$6$&$128$&&$5.0\textrm{e}-06$&&$ 2^6 $&$128 $&& $ 7.09\textrm{e}-08$\\
	$7$&$256$&&$1.2\textrm{e}-06$ &&$ 2^7 $&$256 $&&$8.98\textrm{e}-09  $\\
	$8$ &$512$&&$3.1\textrm{e}-07$&&$ 2^8 $&$512 $&&$1.15\textrm{e}-09  $\\
	$9$&$1028$&&$7.9\textrm{e}-08$&&$ 2^9$&$1028 $&&$ 7.19\textrm{e}-11 $\\
	\hline
	$\rho_N$&&&$2.0588$&&&&&$ 2.9632 $\\
	\noalign{\smallskip}\hline
	\end{tabular}
	\end{small}
	\end{center}
	\label{tab2}}
	\end{table}
\begin{table}[ht]	{
\begin{center}
\begin{small}
\caption{The Comparison between the best results of  Sinc-Nystr\"{o}m \cite{ma2016} and $hp$-collocation methods in the sense of $E_{3}^{N}(1)$ error for Example \ref{ex101}.}\vspace*{0.1in}
\begin{tabular}{cccc|ccccccc}
\hline
\noalign{\smallskip}
$L$&& Sinc-Nystr\"{o}m && $ (M,N) $& $ L $&& $hp$-collocation \\\noalign{\smallskip}
\hline
\noalign{\smallskip}\hline\noalign{\smallskip}
$9$&&$5.68\textrm{e}-02$&&$(9,1)$&$9$ &&$2.77\textrm{e}-12$ \\
$17$&&$3.95\textrm{e}-03$&&$(5,3)$&$15$ &&$1.37\textrm{e}-13$ \\
$33$&&$8.88\textrm{e}-05$&&$(4,8)$ &$32$&&$2.36\textrm{e}-13$ \\
$65$ &&$4.20\textrm{e}-08$&&$(4,16)$&$64$ &&$8.78\textrm{e}-13$ \\
$127$&&$8.29\textrm{e}-15$&&$(9,14)$&$126$ &&$3.83\textrm{e}-12$ \\
\noalign{\smallskip}\hline
\end{tabular}
\end{small}
\end{center}
\label{tab2}}
\end{table}

\subsubsection*{Calculation for long $T$.}
\begin{example} (\cite{sheng})\label{ex7}
In the following example, we consider solving the equation
	\begin{equation*}
	\int_{0}^t sin(t-su(s))\mathrm{d}s=f(t), \quad t\in [0,T],
	\end{equation*}
	with the exact solution $ u(t)=1$. The  Figure \ref{f7} shows  considerable results for various $T${ with fixed  $M=4$ and $ N=2 $ or fixed step size $h_n=h=\frac{1}{2} $ and $ M_n=M^*=3 $ for $ n=1,2$.}
\end{example}
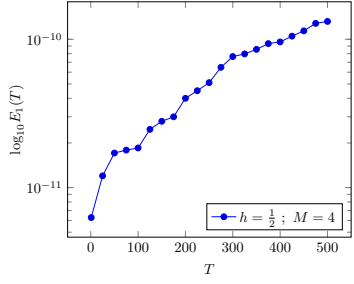
\begin{figure}[h]
	\centering
\begin{tikzpicture}
[scale=0.55, transform shape]
\begin{axis}[
xlabel={$ T $},ylabel={${\log}_{10}E_1(T)$}, ymode=log, legend entries={$h=\frac{1}{2}~;~M=4$}, legend pos=south east]
\addplot coordinates {
(1,6.28E-12)
(25,1.20E-11)
(50,1.71E-11)
(75,1.79E-11)
(100, 1.85E-11)
(125, 2.47E-11)
(150,2.80E-11)
(175, 3.00E-11)
(200,4.00E-11)
(225, 4.5E-11)
(250,5.10E-11)
(275, 6.47E-11)
(300,7.65E-11)
(325, 7.97E-11)
(350,8.55E-11)
(375, 9.35E-11)
(400,9.60E-11)
(425, 1.05E-10)
(450,1.14E-10)
(475, 1.28E-10)
(500, 1.32E-10)
};
\end{axis}
\end{tikzpicture}
\caption{Plots of the $ E_N^1(T) $ error in logarithmic scale for  different time $T$ { with fixed $ h_n=h=\frac{1}{2} $ and  $ M=M_n+1=4$ for $ n=1,2$} for Example \ref{ex7}.}%
\label{f7}%
\end{figure}
\subsubsection*{Steepest gradient solution}
\begin{example}(\cite{sheng})\label{ex5} Consider the nonlinear Volterra integral equation
	\begin{equation*}
	\int_{0}^t u^2(s) \mathrm{d}s=f(t), \quad t\in [0,10],
	\end{equation*}
	where $ f(t)=\frac{\sqrt{\pi}}{4}\big(\erf(10)+\erf(2(t-5))\big)$. The exact solution is $ u(t)=\exp({-2(t-5)^2})$. The Figure \ref{f5} depicts the results for different values of $ N $ and  $ M=M^*+1 $ { with uniform step size $ h_n=h=\frac{10}{N} $ and uniform mode $ M_n=M^* $  for all $ n=1,2$. }
\end{example}
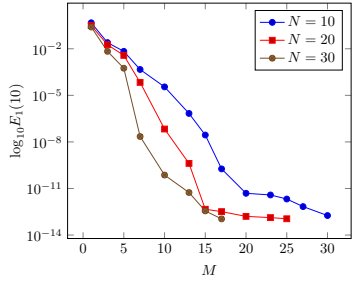
\begin{figure}[h]
	 \centering
\begin{tikzpicture}
[scale=0.55, transform shape]
\begin{axis}[
xlabel={$ M $},ylabel={${\log}_{10}E_1(10)$}, ymode=log, legend entries={$N=10$,$N=20$,$N=30$}, legend pos=north east]
\addplot coordinates {
(1,4.74E-01)
(3,2.50E-02)
(5,6.83E-03)
(7,4.63E-04)
(10, 3.51E-05)
(13,6.83E-07)
(15,2.78E-08)
(17,1.83E-10)
(20,4.90E-12)
(23,3.83E-12)
(25,2.10E-12)
(27,6.83E-13)
(30,1.81E-13)
};
\addplot coordinates {
(1,3.12E-01)
(3,1.83E-02)
(5,3.76E-03)
(7,6.83E-05)
(10, 6.78E-08)
(13,4.13E-10)
(15,4.59E-13)
(17,3.23E-13)
(20,1.60E-13)
(23,1.33E-13)
(25,1.12E-13)
};
\addplot coordinates {
(1,2.51E-01)
(3,6.83E-03)
(5,5.59E-04)
(7,2.23E-08)
(10, 7.36E-11)
(13,5.5E-12)
(15,3.59E-13)
(17,1.1E-13)
};
\end{axis}
\end{tikzpicture}
\caption{Plots of  $E^N_{1}(10)$ in logarithmic scale { for different values of $ M$ and $ N $ with fixed $ h_n=\frac{10}{N} $ and  $ M_n=M^* $ for $n=1, \dots ,N,$ } for Example \ref{ex5}.}%
\label{f5}
\end{figure}
{\subsubsection*{Unknown exact solution}
	In the following example, we consider an equation  which has a unique continuous solution in $[0,T]$  for $T<\infty$  \cite{linz}.
	
	\begin{example}\label{exe11}
		The following Volterra integral equation is considered
		\begin{equation*}
		\int_{0}^{t}  (1+t-s)^2\Big(u(s)+u^3(s)\Big)\mathrm{d}s=t^2,\quad t\in [0,T]. 
		\end{equation*}
		We know that the introduced equation has a unique solution, but the exact solution is not known. So, for the aim of  caparison, we choose $ u^5_{20}(t) $ with $ L=20\times 5=100 $ basis functions as a benchmark. Figure 4 depicts the convergence of the scheme by increasing $ M$ and $ N $ with $T=1,~ M=M_n+1=M^*+1  $ and fixed step size $h_n=h=\frac{1}{N} $ for $ n=1, \dots, N $.  Figure 5 shows the benchmark which is the approximate solution for $ T=10 $.
	\end{example}
	\begin{figure}[ht]
		\centering
		\begin{tikzpicture}
		[scale=0.55, transform shape]
		\begin{axis}[
		xlabel={$ M $},ylabel={${\log}_{10}\Vert u^5_{20}(t)-u^N_{M}(t)\Vert_\infty$}, ymode=log, legend entries={$ N=1 $,$N=3$,$N=5$}, legend pos=north east]
		\addplot coordinates {
			(2,9.37E-02)
			(4,1.65E-03)
			(6, 6.75E-04)
			(8, 7.51E-05)
			(10,1.26E-05)
			(12,5.69E-06)
			(14,9.20E-07)
			(16,3.67E-08)
			(18, 2.80E-08)
			(20, 8.98E-09)
		};
		\addplot coordinates {
			(2,1.565E-02)
			(4,3.53E-04)
			(6, 9.56E-06)
			(8, 9.5E-07)
			(10,1.98E-08)
			(12,5.24E-09)
			(14,1.25E-10)
			(16,3.45E-12)
			(18, 8.90E-13)
			(20, 1.375E-13)
		};
		\addplot coordinates {
			(2,5.67E-03)
			(4,8.77E-05)
			(6, 2.45E-06)
			(8, 8.1E-08)
			(10,1.58E-09)
			(12,5.46E-12)
			(14,8.20E-13)
			(16,1.98E-14)
			(18, 9.20E-15)
		};
		
		\end{axis}
		\end{tikzpicture}
		\caption{{ Plots of the $\Vert u^5_{20}(t)-u^N_{M}(t)\Vert_\infty$ in logarithmic scale for Example \ref{exe11}.} }%
		\label{f2}%
	\end{figure}
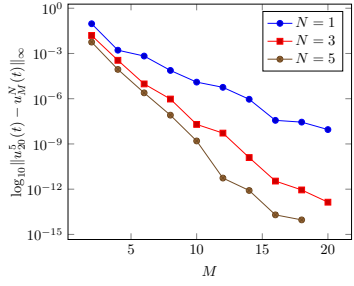
\begin{figure}[ht]
	\begin{center}
		\resizebox*{6cm}{!}{\includegraphics{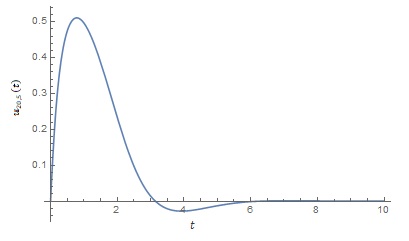}}
		\caption{{The approximate solution $ u^5_{20}(t) $ for $ T=10$. } }\label{figg2}
	\end{center}
\end{figure}
}
\subsection{Special cases}\label{it}
In this part, we present some examples of the first kind integral equations which do not fulfill the assumptions  in \cite{ egger1, kauthenbrunner, ma2016, singh2016, liang2018} and the theorems in Sections \ref{sec 2} and \ref{error analysis}. Due to the lack of enough smoothness properties for kernel $\kappa (s,t)$  and right-hand function $f(t)$, these equations could not be converted to the second kind ones. In all following examples, the advantage and efficiency of $ hp$- collocation method to approximate the non-smooth solutions  vs. $ p$- and $ h$-version methods are shown.
\subsubsection*{Non-differentiable kernel}
\begin{example}\label{ex1}
As a test problem, consider the following first kind Volterra-Hammerstein integral equation
\begin{equation*}
\int_{0}^{t} \kappa(s,t) \Big(\frac{3u^2(s)}{s+1}+\sin(u(s))\Big)\mathrm{d}s=f(t),\quad t\in[0,1],
\end{equation*}
where 
\begin{equation*}
\begin{split}
\kappa(s,t)=\left\lbrace \begin{array}{lcc}
s^2-t+5,& 0\leq t<0.5,&0\leq s\leq 1,\\
1,& 0.5\leq t\leq 1,&0\leq s\leq 1,
\end{array}\right.
\end{split}
\end{equation*}
and the exact solution is $ u(t)=t^{^3}$. Figure \ref{f1} describes the $E^N_{2}(1)$ error for $N=1,2,4$ and various $ M $  { with uniform step size $ h_n=h=\frac{1}{N} $ and uniform degree $ M_n=M^* $  for  $ n=1, \dots ,N$.}
\end{example}
\begin{figure}[h]
	\centering
\begin{tikzpicture}
[scale=0.55, transform shape]
\begin{axis}[
xlabel={$ M $},ylabel={${\log}_{10}E^N_2(1)$}, ymode=log, legend entries={$ N=1 $,$N=2$,$N=4$}, legend pos=north east]
\addplot coordinates {
(2,1.3E-01)
(3,1.32E-02)
(4, 2.86E-08)
(5,1.57E-08)
(6,1.06E-10)
(7,6.78E-12)
(8,9.05E-14)
(9,1.75E-15)
(10,1.66E-15)
};
\addplot coordinates {
(2,1.78E-02)
(3,1.74E-02)
(4, 6.59E-08)
(5,1.50E-09)
(6,1.88E-11)
(7,4.62E-13)
(8,4.99E-16)
(9,5.32E-16)
};
\addplot coordinates {
(2,4.50E-02)
(3,2.44E-03)
(4,1.06E-09)
(5,5.99E-12)
(6,2.15E-14)
(7,4.27E-15)
};
\end{axis}
\end{tikzpicture}
\caption{Plots of the $ E^N_2(1) $ error in logarithmic scale for different $ N $ and $ M $ with fixed $ h_n=\frac{1}{N} $ and  $ M_n=M^* $ for $ n=1, \dots, N$  for Example \ref{ex1}. }%
\label{f1}
\end{figure}
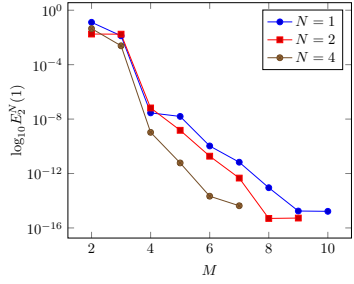
{\bf Discontinuous solution.} In this part, we focus on the nonlinear examples with discontinuous solutions.
\begin{example}\label{ex3} As another test problem consider the following integral equation
\begin{equation*}
\int_{0}^{t}  (t+4s)\Big(u(s)-\frac{s}{2}\Big)^2\mathrm{d}s=f(t), \quad t\in[0,1],
\end{equation*}
where 
\begin{equation*}
\begin{split}
u(t)=\left\lbrace \begin{array}{cc}
\frac{t-1}{2},&t<0.5,\\
2e^{-t},& t\geq 0.5,
\end{array}\right.
\end{split}
\end{equation*}
is the exact solution { and 
\begin{equation*}
 f(t)=\left\lbrace \begin{split}
& 0.75t^2,& t<0.5,\\
& -12.7174 + e^{-2t}(-4- 10t) - 0.96925 t +0.\overline{3}t^4 + 
 e^{-t} (16+ 18t + 10t^2),& t\geq 0.5.
\end{split}\right.
\end{equation*}
}
 In previous examples, we take the degree of polynomials for each $ I_k $ with $ M_k=M^*$ for all $ k=1, \dots,N $. Here, we take  $ M_k=1,~ k=1, \dots ,\frac{N}{2}$ and $ M_k={ M^*},~k=\frac{N}{2}+1, \dots ,N.$ The applicability of this scheme is verified by using less basis functions due to the behavior of the solution.
 In Figure \ref{f3}, we observe the $E^{N}_{3}(1)$ error for different values of $N$  and {$ M $ where $ M=M^*+1 $}. As expected from the theoretical achievements, by decreasing $h$ we get better numerical results. The function $f(t)$ is discontinuous and for equations with discontinuous right-hand side function or its corresponding discontinuous solution, all $ p$-version schemes are incapable to approximate the solution. On the other hand, some recent numerical methods are based on hybrid functions \cite{deh} which could be categorized into $ hp$-version methods since they approximate functions locally, but they solve the final system globally. The superiority of $ hp$-collocation method against hybrid functions method is shown in Figure \ref{fih} for various $ M $, even if we choose $ M_n=M^* $ for the whole interval. { Here, we consider fixed $ N=2$ and various $ M $ with fixed mode $ M_n=M^* $ for $ n=1,2 $. }
\end{example}
\begin{figure}[h]
	\centering
\begin{tikzpicture}
[scale=0.55, transform shape]
\begin{axis}[
xlabel={$ M$},ylabel={${\log}_{10}E_3(1)$}, ymode=log, legend entries={$h=\frac{1}{2}$,$h=\frac{1}{4}$,$h=\frac{1}{8}$,$h=\frac{1}{16}$}, legend pos=north east]
\addplot coordinates {
(2,4.74E-02)
(3,3.34E-03)
(4, 1.54E-04)
(5,5.21E-06)
(6,1.38E-07)
(7,8.10E-09)
(8,5.65E-11)
(9,9.36E-13)
};

\addplot coordinates {
(2,1.12E-02)
(3,1.89E-04)
(4, 1.49E-05)
(5,1.83E-07)
(6,6.35E-09)
(7,2.12E-11)
(8,5.63E-13)
(9,5.5E-13)
};
\addplot coordinates {
(2,8.51E-03)
(3,6.07E-05)
(4, 2.30E-06)
(5,5.79E-09)
(6,9.09E-11)
(7,2.12E-12)
(8,2.84E-13)
(9,2.42E-13)
};
\addplot coordinates {
(2,5.69E-03)
(3,7.38E-06)
(4, 2.68E-07)
(5,1.69E-09)
(6,6.21E-11)
(7,5.23E-13)
(8,1.69E-13)
};
\end{axis}
\end{tikzpicture}
\caption{Plots of the $E^{N}_{3}(1)$ error in logarithmic scale {for various $ M=M^*+1$ and $ N $ with fixed step size $ h_n=\frac{1}{N} $ and  $ M_n=1,~ n=1, \dots ,\frac{N}{2}$ and $ M_n=M^*,~n=\frac{N}{2}+1, \dots ,N$} for Example \ref{ex3}.}%
\label{f3}
\end{figure}
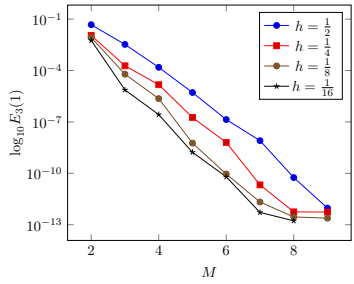

\begin{figure}[h]
	\centering
\begin{tikzpicture}
[scale=0.55, transform shape]
\begin{axis}[
xlabel={{$ M$}},ylabel={$E^{N}_{3}(1)$}, ymode=log, legend entries={$hp$-collocation, method in $\cite{deh} $ }, legend pos=north east]
\addplot coordinates {
(2,4.74E-02)
(3,3.34E-03)
(4, 1.95E-04)
(5,5.21E-06)
(6,1.38E-07)
(7,8.10E-09)
(8,5.65E-11)
(9,9.36E-13)
};
\addplot coordinates {
(2,1.11E-01)
(3,8.45E-03)
(4, 5.87E-04)
(5,4.75E-05)
(6,3.44E-6)
(7,1.83E-7)
(8,2.28E-8)
(9,4.34E-09)
};
\end{axis}
\end{tikzpicture}
\caption{Comparison between $ hp$-collocation method and the hybrid method in \cite{deh} with the $E^{N}_{3}(1)$ error in logarithmic scale with fixed $ N=2 $ and various $ M $ for Example \ref{ex3}.}%
\label{fih}
\end{figure}
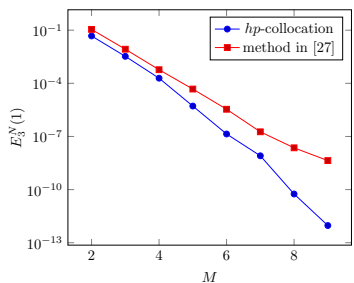
\begin{example}(\cite{sheng})\label{ex2}
	In this example, we solve the following integral equation
	\begin{equation*}
	\int_{0}^{t} u(s)\mathrm{d}s=f(t),\quad t\in[0,10],
	\end{equation*}
	where $f(t)$ can is determined such that 
	\begin{equation*}
	\begin{split}
	u(t)=\left\lbrace \begin{array}{cc}
	t,& t<5,\\
	\frac{1}{t},& t\geq 5,
	\end{array}\right.
	\end{split}
	\end{equation*}
be the exact solution.	Figure \ref{f2} displays the error $E^{N}_{1}(10)$ for different $N$ and $ M $. { In this example, we choose fixed step size $ h_n=\frac{10}{N}$ and fixed mode $ M_n=M^* $ for $ n=1,\dots,N $.} As the theoretical results  predict by increasing $M$, the error is reduced. 
\end{example}
\begin{figure}[ht]
	\centering
	\begin{tikzpicture}
	[scale=0.55, transform shape]
	\begin{axis}[
	xlabel={$ M $},ylabel={${\log}_{10}E^{N}_1(10)$}, ymode=log, legend entries={$h=\frac{10}{2}$,$h=\frac{10}{4}$,$h=\frac{10}{8}$,$h=\frac{10}{16}$}, legend pos=north east]
	\addplot coordinates {
		(4,1.27E-03)
		(6,1.83E-04)
		(8, 1.55E-06)
		(10,5.16E-08)
		(12,1.69E-09)
		(14,5.36E-11)
		(16,1.69E-12)
	};
	\addplot coordinates {
		(4,1.00E-04)
		(6,9.23E-06)
		(8, 3.29E-08)
		(10,3.75E-10)
		(12,4.26E-12)
		(14,1.05E-13)
		(16,4.84E-14)
	};
	\addplot coordinates {
		(4,3.5E-05)
		(6,3.75E-07)
		(8, 3.77E-10)
		(10,1.24E-12)
		(12,8.12E-14)
	};
	\addplot coordinates {
		(4,4.2E-06)
		(6,1.25E-08)
		(8, 3.29E-12)
		(10,3.61E-13)
	};
	\end{axis}
	\end{tikzpicture}
	\caption{Plots of the $E_1^N(10)$ error in logarithmic scale for Example \ref{ex2}. }%
	\label{f2}%
\end{figure}
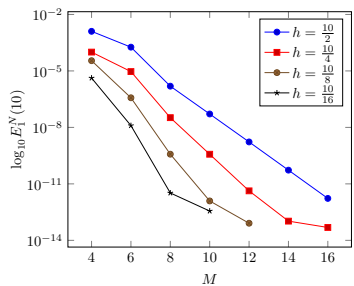
\subsubsection*{Function $\boldsymbol{\kappa(t,t)=0} $.}

\begin{example}\label{ex4} Consider the following integral equation
\begin{equation*}
\int_{0}^{t}  (s-t)e^{u(s)}\mathrm{d}s=f(t),\quad t\in[0,1],
\end{equation*}
where $ u(t)=\vert t-0.5\vert$ is the exact solution. Figure \ref{f4} shows the results in terms of different $ E_i^N(1),~i=1,2,3. $ As expected, their behavior are almost the same. The results are reported for $ N=2$ and various $ M$ { with fixed step size $ h_n=\frac{1}{2}$ and  fixed mode $ M_n=M^* $ for $ n=1,2. $} Note that the solution has finite regularity.
\end{example}.
\begin{figure}[h]
	\centering
\begin{tikzpicture}
[scale=0.55, transform shape]
\begin{axis}[
xlabel={$ M $},ylabel={$ N=2 $}, ymode=log, legend entries={${\log}_{10}E^N_{1}(1)$,${\log}_{10}E^N_{2}(1)$,${\log}_{10}E^N_{3}(1)$}, legend pos=north east]
\addplot coordinates {
(2,7.02E-04)
(3,2.44E-06)
(4, 4.91E-09)
(5,4.27E-12)
(6,8.12E-14)
};
\addplot coordinates {
(2,2.84E-04)
(3,4.55E-06)
(4, 6.74E-09)
(5,2.74E-12)
(6,1.55E-13)
};
\addplot coordinates {
(2,1.27E-03)
(3,1.83E-06)
(4, 1.55E-08)
(5,5.16E-11)
(6,1.69E-13)
};
\end{axis}
\end{tikzpicture}
\caption{Plots of the $E^N_{1}(1), E^N_{2}(1)$ and $E^N_{3}(1)$ errors in logarithmic scale for Example \ref{ex4}.}%
\label{f4}%
\end{figure}
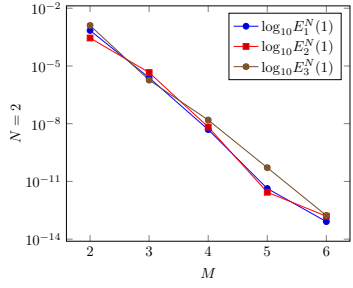
\subsubsection*{Singular solution}
As a final test problem, we consider an equation which has a weakly singular solution.
\begin{example}\label{exe10}
(\cite{sheng}) The example is the following integral equation
\begin{equation*}
\int_{0}^{t}  u^2(s)\mathrm{d}s=f(t),\quad t\in[0,1],
\end{equation*}
where $ u(t)=t^r$ with non-integer $r>\frac{1}{2}$ is the exact solution. Note that this solution has finite regularity. Figure \ref{f100} depicts the $ E_1^N(1) $ error for different $r=0.51,1.51,2.51.$ As we expect for bigger $r$,   the error is reduced significantly. { Here, we choose fixed step size $ h_n=\frac{1}{10}$ and various $ M $ with fixed mode $ M_n=M^* $ for $ n=1,\dots,10 $.}
\end{example}
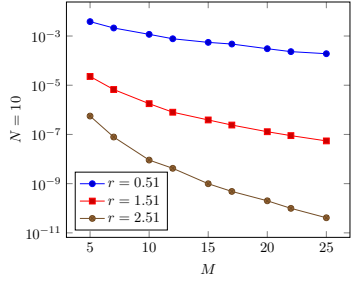
\begin{figure}[h]
	\centering
\begin{tikzpicture}
[scale=0.55, transform shape]
\begin{axis}[
xlabel={$ M $},ylabel={$ N=10 $}, ymode=log, legend entries={$r=0.51$,$ r=1.51 $,$ r=2.51 $}, legend pos=south west]
\addplot coordinates {
(5,3.85E-3)
(7, 2.12E-3)
(10,1.17E-3)
(12,7.73E-4)
(15, 5.52E-4)
(17, 4.72E-4)
(20,3.05E-4)
(22,2.32E-4)
(25,1.91E-4)
};
\addplot coordinates {
(5,2.27E-5)
(7, 6.69E-6)
(10,1.78E-6)
(12,8.01E-7)
(15, 3.88E-7)
(17, 2.41E-7)
(20,1.29E-7)
(22,8.95E-8)
(25,5.46E-8)
};
\addplot coordinates {
(5,5.58E-7)
(7, 7.85E-8)
(10,9.08E-9)
(12,4.20E-9)
(15, 9.92E-10)
(17, 4.83E-10)
(20,2.01E-10)
(22,9.92E-11)
(25,4.11E-11)
};
\end{axis}
\end{tikzpicture}
\caption{Plots of the $ E_1^N(1) $ error in logarithmic scale with  $r=0.51,1.51,2.51,$ for Example  \ref{exe10}.}
\label{f100}%
\end{figure}

\section*{Conclusion\label{SecConcl}}
Integral equations of the first kind and their approximations have a long history and many
researchers have worked on them.  In this paper, the idea of $hp$-version projection methods has been studied and a prior error analysis for the $hp$-version collocation method for the Volterra integral equations of the first kind developed. The existence and uniqueness of the solution have been investigated in the suitable Sobolev spaces under some reasonable assumptions on the nonlinearity. Numerical treatments indicate that the proposed scheme is effective and powerful to deal with smooth and non-smooth solutions, especially for long-time integration. 
{
\section*{Appendix A. Proof of the relations (\ref{eqE1}) and (\ref{eqE2})}
In this section, we need some requirements. Remind the following relation in (\ref{tau})
\[\tau=\sigma(x,\theta):=\frac{1+x}{2}\theta+\frac{1-x}{2}\]
where $x,\theta \in \Lambda$ and $ \tau\in (-1,x]. $ Let $ \mathcal{I}^\theta_{M_n}:C(\Lambda)\rightarrow \mathcal{P}_{M_n}(\Lambda)$ be the Legendre-Gauss interpolation operator. Next, it can be defined a new Legendre-Gauss interpolation operator $ \mathcal{\tilde{I}}^{\tau,x}_{M_n}:C(-1,x)\rightarrow \mathcal{P}_{M_n}(-1,x)$ with the property
\[\mathcal{\tilde{I}}^{\tau,x}_{M_n} g(\tau_{n,i})=g(\tau_{n,i}), \quad 0\leq i\leq M_n,\]
where $ \tau_{n,i}:=\tau_{n,i}(x)=\sigma(x,\theta_{n,i}) $ and $ \theta_{n,i} $ are the $ M_n+1 $ Legendre-Gauss quadrature nodes in $ \Lambda $.
Clearly,
\[\mathcal{\tilde{I}}^{\tau,x}_{M_n} g(\tau_{n,i})=g(\tau_{n,i})=g(\sigma(x,\theta_{n,i}))=\mathcal{I}^\theta_{M_n}g(\sigma(x,\theta_{n,i})), \quad\quad 0\leq i\leq M_n,\]
and by Eq. \eqref{eq2.10}, we get
\begin{equation}\label{eq4.9}
\begin{split}
\int_{-1}^{x}\mathcal{\tilde{I}}^{\tau,x}_{M_n} g(\tau)\mathrm{d}\tau&=\frac{1+x}{2} \int_{\Lambda}\mathcal{I}^\theta_{M_n}g(\sigma(x,\theta))\mathrm{d}\theta\\
&=\frac{1+x}{2} \sum_{i=0}^{M_n}g(\sigma(x,\theta_{n,i}))w_{n,i}\\
&=\frac{1+x}{2} \sum_{i=0}^{M_n}g(\tau_{n,i})w_{n,i}.
\end{split}
\end{equation}
Meanwhile,
\begin{equation}\label{eq4.10}
\int_{-1}^{x}\Big(\mathcal{\tilde{I}}^{\tau,x}_{M_n} g(\tau)\Big)^2\mathrm{d}\tau=\frac{1+x}{2} \sum_{i=0}^{M_n}g^2(\tau_{n,i})w_{n,i}.
\end{equation}
By the above argument and Lemma \ref{lem1}, we have
\begin{equation}\label{eq4.11}
\begin{split}
\int_{-1}^{x}\Big(g(\tau)-\mathcal{\tilde{I}}^{\tau,x}_{M_n} g(\tau)\Big)^2\mathrm{d}\tau&=\frac{1+x}{2} \int_{\Lambda}\Big(g(\sigma(x,\theta))-\mathcal{I}^\theta_{M_n}g(\sigma(x,\theta))\Big)^2\mathrm{d}\theta\\
&\leq cM_n^{-2m}\frac{1+x}{2}\int_{\Lambda}\Big(\partial^m_\theta g(\sigma(x,\theta))\Big)^2(1-\theta^2)^m\mathrm{d}\theta\\
&\leq cM_n\int_{-1}^x\Big(\partial^m_\tau g(\tau)\Big)^2(x-\tau)^m(1+\tau)^m\mathrm{d}\tau.
\end{split}
\end{equation}
Now, in order to estimate $ \Vert E_1\Vert$ and  $ \Vert E_2\Vert$ defined on Eq. \eqref{E12}, we rewrite them using \eqref{eq4.9} as follows
 	\begin{equation}
 	\begin{split}
 	E_1(x)
 	&=\frac{h_{n}}{2}\Big \vert  (\mathcal{I}-\mathcal{I}_{M_n}^{x})\int_{-1}^x \mathcal{\tilde{I}}_{M_n}^{\tau,x}\Big(\tilde{\kappa}^n (\tau,x){ \tilde{\psi}^{n}}(\tau,\tilde{u}^{n}_{M_n}( \tau))\Big)\mathrm{d}\tau\Big \vert,\\
 	E_2(x)&=\frac{h_{n}}{2}\Big \vert \int_{-1}^x(\mathcal{I}- \mathcal{\tilde{I}}_{M_n}^{\tau,x})\Big(\tilde{\kappa}^n (\tau,x){ \tilde{\psi}^{n}}(\tau,\tilde{u}^{n}_{M_n}( \tau))\Big)\mathrm{d}\tau\Big \vert.
 	\end{split}
 	\end{equation}
 	 In advance, we obtain the upper bound for $ \Vert E_2\Vert$. By 
Eqs. \eqref{eq4.11}, \eqref{eqt} and Cauchy-Schwarz inequality, we have 
	 \begin{equation}
	  	\begin{split}
	  	\Vert E_2(x)\Vert^2&\leq c h_n^2\int_\Lambda \int_{-1}^x\Big((\mathcal{I}-  \mathcal{\tilde{I}}_{M_n}^{\tau,x})\Big(\tilde{\kappa}^n (\tau,x){ \tilde{\psi}^{n}}(\tau,\tilde{u}^{n}_{M_n}( \tau))\Big)\Big)^2\mathrm{d}\tau \mathrm{d}x\\
	  	&\leq ch_n^{2}M_n^{-2m}\int_{\Lambda}\int_{-1}^{x}\Big(\partial_{\tau}^{m}(\tilde{\kappa}^n (\tau,x){ \tilde{\psi}^{n}}(\tau,\tilde{u}^{n}_{M_n}( \tau)))\Big)^2(x-\tau)^{m}(1+\tau)^m\mathrm{d}\tau \mathrm{d}x\\
	  	&\leq ch_n^{2m}M_n^{-2m}\int_{t_{n-1}}^{t_n}\int_{t_{n-1}}^{t}\Big(\partial_{s}^{m}(\kappa(s,t)\psi(s,u^{N}_{M}( s)))\Big)^2\mathrm{d}s\mathrm{d}t\\
	  	&\leq ch_n^{2m+1}M_n^{-2m}\sum_{i=0}^m\int_{t_{n-1}}^{t_n}\Big(\partial_{s}^{i}\psi(s,u^{N}_{M}(s)))\Big)^2\mathrm{d}s\mathrm{d}t\\
	  	&\leq ch_n^{2m+1}M_n^{-2m}\Vert \psi(.,u^{N}_{M})\Vert^2_{H^{m}(I_n)}.
	  	\end{split}
	  	\end{equation}
For the aim of deriving the upper bound for $ \Vert E_1\Vert $, by Eqs. (\ref{eq4.9}) and (\ref{eqI}) we get
\begin{equation}\label{eee}
\begin{split}
\Vert E_1(x)\Vert^2\leq &  \frac{h_n^2}{4}\int_{\Lambda}\Big[ \int_{-1}^x \mathcal{\tilde{I}}_{M_n}^{\tau,x}\Big(\tilde{\kappa}^n (\tau,x)\Big({ \tilde{\psi}^{n}}(\tau,\tilde{u}^{n}_{M_n}( \tau))-{ \tilde{\psi}^{n}}(\tau,\tilde{u}^{n}( \tau))\Big)\Big)\mathrm{d}\tau\\&
- \mathcal{I}_{M_n}^{x} \int_{-1}^x \mathcal{\tilde{I}}_{M_n}^{\tau,x}\Big(\tilde{\kappa}^n (\tau,x)\Big({ \tilde{\psi}^{n}}(\tau,\tilde{u}^{n}_{M_n}( \tau))-{ \tilde{\psi}^{n}}(\tau,\tilde{u}^{n}( \tau))\Big)\Big)\mathrm{d}\tau\\&+
(\mathcal{I}-\mathcal{I}_{M_n}^{x})\int_{-1}^{x} (\mathcal{\tilde{I}}_{M_n}^{\tau,x}-\mathcal{I})\Big(\tilde{\kappa}^n (\tau,x){ \tilde{\psi}^{n}}(\tau,\tilde{u}^{n}( \tau))\Big)\mathrm{d}\tau\\&+(\mathcal{I}-\mathcal{I}_{M_n}^{x})\int_{-1}^{x} \tilde{\kappa}^n (\tau,x){ \tilde{\psi}^{n}}(\tau,\tilde{u}^{n}( \tau))\mathrm{d}\tau\Big]^2\mathrm{d}x
\\&\leq  h^2_n\Big(E_{1,1}+E_{1,2}+E_{1,3}+E_{1,4}\Big),
\end{split}
\end{equation}
where  we introduce all terms $ E_{1,i}, i=1,2,3,4$ in sequel to obtain upper bounds for them. By Eqs. \eqref{eq4.9}, \eqref{eq4.10}, \eqref{eq4.11}, \eqref{eqt}, \eqref{lip} and H\"{o}lder inequality, we have
\begin{equation}
\begin{split}
E_{1,1}&=\int_{\Lambda}\Big[ \int_{-1}^x \mathcal{\tilde{I}}_{M_n}^{\tau,x}\Big(\tilde{\kappa}^n (\tau,x)\Big({\tilde{\psi}^{n}}(\tau,\tilde{u}^{n}_{M_n}( \tau))-{ \tilde{\psi}^{n}}(\tau,\tilde{u}^{n}( \tau))\Big)\Big)\mathrm{d}\tau\Big]^2\mathrm{d}x\\
&=\int_{\Lambda}\Big[\frac{1+x}{2} \sum_{i=0}^{M_n}\tilde{\kappa}^n (\tau_{n,i},x)({ \tilde{\psi}^{n}}(\tau_{n,i},\tilde{u}^{n}_{M_n}( \tau_{n,i}))-{ \tilde{\psi}^{n}}(\tau_{n,i},\tilde{u}^{n}( \tau_{n,i})))w_{n,i}\Big]^2\mathrm{d}x\\
&\leq c\gamma^2 \int_{\Lambda}(1+x) \sum_{i=0}^{M_n} \Big(\tilde{u}^{n}_{M_n}( \tau_{n,i})-\tilde{u}^{n}( \tau_{n,i})\Big)^2w_{n,i}\mathrm{d}x\\
& \leq c\gamma^2 \int_{\Lambda}\int_{-1}^x\Big(\mathcal{\tilde{I}}_{M_n}^{\tau,x}\tilde{u}^{n}( \tau)-\tilde{u}^{n}_{M_n}( \tau)\Big)^2\mathrm{d}\tau\mathrm{d}x
\\
&\leq c\gamma^2 \int_{\Lambda}\int_{-1}^x\Big[(\mathcal{\tilde{I}}_{M_n}^{\tau,x}\tilde{u}^{n}( \tau)-\tilde{u}^{n}( \tau))^2+(\tilde{u}^{n}( \tau)-\tilde{u}^{n}_{M_n}( \tau))^2\Big]\mathrm{d}\tau\mathrm{d}x\\
& \leq \frac{d\gamma^2}{6}\Vert e_n\Vert^2+c\gamma^2M_n^{-2m}\int_\Lambda\Big(\partial_\tau^m\tilde{u}^{n}( \tau)\Big)^2(1-\tau^2)^m\mathrm{d}\tau\\
& \leq \frac{d\gamma^2}{6}\Vert e_n\Vert^2+c\gamma^2h_n^{2m-1}M_n^{-2m}\int_{t_{n-1}}^{t_n}\Big(\partial_t^m u(t)\Big)^2\mathrm{d}t
\\&=\frac{d\gamma^2}{6}\Vert e_n\Vert^2+c\gamma^2h_n^{2m-1}M_n^{-2m}\Vert\partial_t^mu\Vert^2_{L^2(I_n)},
\end{split}
\end{equation} 
where the constant $ d $ depends on $\max\limits_{(s,t)\in I\times I}\vert \kappa(s,t)\vert.$	Next, 
\begin{equation}
\begin{split}
E_{1,2}&=\int_\Lambda\Big[\mathcal{I}_{M_n}^{x} \int_{-1}^x \mathcal{\tilde{I}}_{M_n}^{\tau,x}\Big(\tilde{\kappa}^n (\tau,x)\Big({ \tilde{\psi}^{n}}(\tau,\tilde{u}^{n}_{M_n}( \tau))-{ \tilde{\psi}^{n}}(\tau,\tilde{u}^{n}( \tau))\Big)\Big)\mathrm{d}\tau\Big]^2\mathrm{d}x\\
&=\sum_{j=0}^{M_n}\Big[ \dfrac{1+x_{n,j}}{2}\sum_{i=0}^{M_n}\tilde{\kappa}^n (\tau_{n,i},x_{n,j})\Big({ \tilde{\psi}^{n}}(\tau_{n,i},\tilde{u}^{n}_{M_n}( \tau_{n,i}))-{ \tilde{\psi}^{n}}(\tau_{n,i},\tilde{u}^{n}( \tau_{n,i}))\Big)w_{n,i} \Big]^2w_{n,j},
\end{split}
\end{equation}
where $ \tau_{n,i}=\tau_{n,i}(x_{n,j}).$ By Lipschitz condition \eqref{lip} and Eqs. \eqref{eq4.10}, \eqref{eq4.11}, \eqref{eqt} and the H\"{o}lder inequality, we have
\begin{equation}
\begin{split}
E_{1,2}&\leq c\gamma^2\sum_{j=0}^{M_n} (1+x_{n,j})w_{n,j}\sum_{i=0}^{M_n}\Big(\tilde{u}^{n}_{M_n}( \tau_{n,i})-\tilde{u}^{n}( \tau_{n,i})\Big)^2w_{n,i}\\
&\leq c\gamma^2\sum_{j=0}^{M_n} w_{n,j}\int_{-1}^{x_{n,j}}\Big(\tilde{u}^{n}_{M_n}(\tau)-\mathcal{\tilde{I}}_{M_n}^{\tau,x_{n,j}}\tilde{u}^{n}( \tau)\Big)^2\mathrm{d}\tau\\
&\leq c\gamma^2\sum_{j=0}^{M_n} w_{n,j}\int_{-1}^{x_{n,j}}\Big((\tilde{u}^{n}_{M_n}(\tau)-\tilde{u}^{n}(\tau))^2+(\tilde{u}^{n}( \tau)-\mathcal{\tilde{I}}_{M_n}^{\tau,x_{n,j}}\tilde{u}^{n}( \tau))^2\Big)\mathrm{d}\tau\\
&\leq \frac{d\gamma^2}{6} \Vert e_n \Vert^2+c\gamma^2M_n^{-2m}\int_{\Lambda}\Big(\partial_\tau^m\tilde{u}^{n}(\tau)\Big)^2(1-\tau^2)^m\mathrm{d}\tau\\
&\leq \frac{d\gamma^2}{6} \Vert e_n \Vert^2+c\gamma^2M_n^{-2m}h_n^{2m-1}\int_{t_{n-1}}^{t_n}\Big(\partial_t^mu(t)\Big)^2\mathrm{d}t\\
& = \frac{d\gamma^2}{6} \Vert e_n \Vert^2+c\gamma^2M_n^{-2m}h_n^{2m-1}\Vert\partial_t^mu\Vert^2_{L^2(I_n)}.
\end{split}
\end{equation}
 To derive an upper bound for $ E_{1,4},$ we follow Lemma \ref{lem1} and Eq. \eqref{eqt} to get
\begin{equation}
\begin{split}
E_{1,4}&=\int_{\Lambda}\Big[(\mathcal{I}-\mathcal{I}_{M_n}^{x})\int_{-1}^{x} \tilde{\kappa}^n (\tau,x){ \tilde{\psi}^{n}}(\tau,\tilde{u}^{n}( \tau))\mathrm{d}\tau\Big]^2\mathrm{d}x\\
&\leq cM_n^{-2m}\int_\Lambda \Big( \partial_{x}^m \int_{-1}^{x} \tilde{\kappa}^n (\tau,x){ \tilde{\psi}^{n}}(\tau,\tilde{u}^{n}( \tau))\mathrm{d}\tau\Big)^2(1-x^2)^m\mathrm{d}x\\
& \leq ch_n^{2m-3}M_n^{-2m}\int_{t_{n-1}}^{t_n}\Big( \partial_{t}^m \int_{t_{n-1}}^{t} \tilde{\kappa}^n (s,t){ \tilde{\psi}^{n}}(s,\tilde{u}^{n}( s))\mathrm{d}s\Big)^2\mathrm{d}t\\
& \leq ch_n^{2m-3}M_n^{-2m}\sum_{i=0}^{m-1}\int_{t_{n-1}}^{t_n}\Big( \partial_{t}^i  \psi(t,u(t))\Big)^2\mathrm{d}t\\
& \leq ch_n^{2m-3}M_n^{-2m}\Vert \psi(.,u)\Vert^2_{H^{m}(I_n)}.
\end{split}
\end{equation}
Now, it remains to estimate $ E_{1,3} $ in  \eqref{eee}. To this end, using Lemma \ref{lem1}, Eq. \eqref{eq4.9} and the Cauchy-Schwarz inequality, we deduce that
\begin{equation}
\begin{split}
\Vert E_{1,3}\Vert& \leq\int_{\Lambda}\Big[
(\mathcal{I}-\mathcal{I}_{M_n}^{x})\int_{-1}^{x} (\mathcal{\tilde{I}}_{M_n}^{\tau,x}-\mathcal{I})\Big(\tilde{\kappa}^n (\tau,x){ \tilde{\psi}^{n}}(\tau,\tilde{u}^{n}( \tau))\Big)\mathrm{d}\tau\Big]^2\mathrm{d}x\\
& =\int_{\Lambda}\Big[
(\mathcal{I}-\mathcal{I}_{M_n}^{x})\int_{\Lambda} (\mathcal{\tilde{I}}_{M_n}^{\theta}-\mathcal{I})\Big(\frac{1+x}{2}\tilde{\kappa}^n (\sigma(x,\theta),x){ \tilde{\psi}^{n}}(\sigma(x,\theta),\tilde{u}^{n}( \sigma(x,\theta)))\Big)\mathrm{d}\theta\Big]^2\mathrm{d}x.\\
\end{split}
\end{equation}
The right hand side of above equation is equal to $ D_{4,2} $ defined on Eq. (4.18) in \cite[p. 1965]{sheng}. By following the same argument of the upper bound for $ D_{4,2} $, one can conclude that
\begin{equation}\label{eee2}
\begin{split}
\Vert E_{1,3}\Vert& \leq ch_n^{2m-3}M_n^{-2m}\sum_{i=0}^{m} \int_{t_{n-1}}^{t_n} \Big(\partial_t^i \psi(t,u(t))\Big)^2\mathrm{d}t\\
&= ch_n^{2m-3}M_n^{-2m}\Vert \psi(.,u)\Vert^2_{H^m(I_n)}.
\end{split}
\end{equation}
Consequently, a combination of Eqs. \eqref{eee}-\eqref{eee2} lead to \eqref{eqE1}, i.e.,
\begin{equation}
\begin{split}
\Vert E_1\Vert^2& \leq h_n^2\Big(\frac{2d\gamma^2}{6}\Vert e_n\Vert^2+c\gamma^2M_n^{-2m}h_n^{2m-1}\Vert\partial_t^mu\Vert^2_{L^2(I_n)}+ch_n^{2m-3}M_n^{-2m}\Vert \psi(.,u)\Vert^2_{H^m(I_n)}\Big)\\
&\leq \frac{d\gamma^2h_n^2}{3}\Vert e_n\Vert^2+cM_n^{-2m}h_n^{2m-1}\Big(\gamma^2h_n^2\Vert\partial_t^mu\Vert^2_{L^2(I_n)}+\Vert \psi(.,u)\Vert^2_{H^m(I_n)}\Big).
\end{split}
\end{equation}
{\section*{Appendix B. The proof of relation (\ref{eqB}) }
\begin{equation}\label{b3}
\begin{split}
	\Big\Vert B_3(x)\Big\Vert^2&=\Big\Vert -\sum\limits_{k=1}^{n-1} \frac{h_k}{2}\Big( \tilde{\kappa}^k(. ,x), {{ \tilde{\psi}^{k}}}(.,\tilde{u}^{k}(.))\Big)+\sum\limits_{k=1}^{n-1} \frac{h_k}{2}\mathcal{I}_{M_n}^x\Big\langle \tilde{\kappa}^k(.,x), {{ \tilde{\psi}^{k}}}(.,\tilde{u}_{M_k}^{k}(.))\Big\rangle_{M_k}\Big\Vert^2
	\\
	&\leq \dfrac{1}{4}(\sum\limits_{k=1}^{n-1} h_k\Big\Vert -\int_{\Lambda} \tilde{\kappa}^k(\eta,x) {{ \tilde{\psi}^{k}}}(\eta,\tilde{u}^{k}(\eta))\mathrm{d}\eta+\mathcal{I}_{M_n}^x\Big(\int_{\Lambda} \mathcal{I}_{M_k}^\eta \Big(\tilde{\kappa}^k(\eta,x), {{ \tilde{\psi}^{k}}}(\eta,\tilde{u}_{M_k}^{k}(\eta))\Big)\mathrm{d}\eta\Big)\Big\Vert^2\\
	&\leq \dfrac{1}{4}\sum\limits_{j=1}^{n-1} h_j\sum\limits_{k=1}^{n-1} h_k\Big\Vert \int_{\Lambda} \tilde{\kappa}^k(\eta,x) {{ \tilde{\psi}^{k}}}(\eta,\tilde{u}^{k}(\eta))\mathrm{d}\eta-\mathcal{I}_{M_n}^x\Big(\int_{\Lambda} \mathcal{I}_{M_k}^\eta \Big(\tilde{\kappa}^k(\eta,x), {{ \tilde{\psi}^{k}}}(\eta,\tilde{u}_{M_k}^{k}(\eta))\Big)\mathrm{d}\eta\Big)\Big\Vert^2\\
		&\leq \dfrac{T}{4}\sum\limits_{k=1}^{n-1} h_k\Big\Vert \int_{\Lambda} \tilde{\kappa}^k(\eta,x) {{ \tilde{\psi}^{k}}}(\eta,\tilde{u}^{k}(\eta))\mathrm{d}\eta-\mathcal{I}_{M_n}^x\Big(\int_{\Lambda} \mathcal{I}_{M_k}^\eta \Big(\tilde{\kappa}^k(\eta,x), {{ \tilde{\psi}^{k}}}(\eta,\tilde{u}_{M_k}^{k}(\eta))\Big)\mathrm{d}\eta\Big)\Big\Vert^2.
\end{split}
\end{equation}
Clearly, we obtain 
\begin{equation*}
\Vert B_3(x)\Vert\leq \dfrac{3T}{4}\sum\limits_{k=1}^{n-1}h_k(B_{3,1}+B_{3,2}+B_{3,3}),
\end{equation*}
where 
\begin{equation}
\begin{split}
& B_{3,1}=\Big\Vert \int_{\Lambda} (\mathcal{I}-\mathcal{I}_{M_k}^\eta)\tilde{\kappa}^k(\eta,x) {{ \tilde{\psi}^{k}}}(\eta,\tilde{u}^{k}(\eta))\mathrm{d}\eta\Big\Vert^2\\
& B_{3,2}=\Big\Vert \int_{\Lambda} \mathcal{I}_{M_k}^\eta\Big(\tilde{\kappa}^k(\eta,x)( {{ \tilde{\psi}^{k}}}(\eta,\tilde{u}^{k}(\eta))-{{ \tilde{\psi}^{k}}}(\eta,\tilde{u}_{M_k}^{k}(\eta)))\Big)\mathrm{d}\eta\Big\Vert^2\\
& B_{3,3}=\Big\Vert (\mathcal{I}-\mathcal{I}_{M_n}^x)\int_{\Lambda}\mathcal{I}_{M_k}^\eta\Big( \tilde{\kappa}^k(\eta,x) {{ \tilde{\psi}^{k}}}(\eta,\tilde{u}_{M_k}^{k}(\eta))\Big)\mathrm{d}\eta\Big\Vert^2.
\end{split}
\end{equation}
Now using relation \eqref{eqt}, Lemma \ref{lem1}  and the Cauchy-Schwarz inequality, we have
\begin{equation}
\begin{split}
B_{3,1}&\leq c\int_{\Lambda} \int_{\Lambda} \Big((\mathcal{I}-\mathcal{I}_{M_k}^\eta)\tilde{\kappa}^k(\eta,x) {{ \tilde{\psi}^{k}}}(\eta,\tilde{u}^{k}(\eta))\Big)^2\mathrm{d}\eta\mathrm{d}x\\
&\leq cM_k^{-2m}\int_{\Lambda} \int_{\Lambda}\Big( \partial^m_{\eta}( \tilde{\kappa}^k(\eta,x) { \tilde{\psi}^{k}}(\eta,\tilde{u}^{k}(\eta)))\Big)^2(1-\eta^2)^m\mathrm{d}\eta\mathrm{d}x\\
&\leq ch_k^{2m-1}h_n^{-1}M_k^{-2m}\int_{I_n}\int_{I_k}\Big( \partial^m_{s}( \kappa(s,\tau) \psi(s,u(s)))\Big)^2\mathrm{d}s\mathrm{d}\tau\\
&\leq ch_k^{2m-1}h_n^{-1}M_k^{-2m}\sum\limits_{j=0}^{m}\int_{I_k}\Big( \partial^j_{s} \psi(s,u(s))\Big)^2\mathrm{d}s\mathrm{d}\tau\\
&= ch_k^{2m-1}M_k^{-2m}\Vert \psi(.,u(.)))\Vert^2_{H^m(I_k)}.
\end{split}
\end{equation}
In addition, by  the relations \eqref{eqI}, \eqref{eqt}, Lemma \ref{lem1},  Lipschitz condition \eqref{lip} and H\"{o}lder inequality, we get
\begin{equation}\label{b33}
\begin{split}
B_{3,2}&= \int_{\Lambda} \Big[\sum_{j=0}^{M_k}\tilde{\kappa}^k(x_{k,j},x)\Big( {{ \tilde{\psi}^{k}}}(x_{k,j},\tilde{u}^{k}(x_{k,j}))-{{ \tilde{\psi}^{k}}}(x_{k,j},\tilde{u}_{M_k}^{k}(x_{k,j}))\Big)w_{k,j}\Big]^2\mathrm{d}x\\&\leq c\gamma^2\int_\Lambda \sum_{j=0}^{M_k} \Big(\tilde{u}^{k}(x_{k,j})-\tilde{u}_{M_k}^{k}(x_{k,j})\Big)^2 w_{k,j}\mathrm{d}x\\
&\leq c\gamma^2\int_\Lambda \Big(\mathcal{I}_{M_k}^\eta \tilde{u}^{k}(\eta)-\tilde{u}_{M_k}^{k}(\eta)\Big)^2\mathrm{d}\eta\\
&\leq c\gamma^2\int_\Lambda \Big(\mathcal{I}_{M_k}^\eta \tilde{u}^{k}(\eta)-\tilde{u}^k(\eta)\Big)^2+\Big(\tilde{u}^k(\eta)-\tilde{u}_{M_k}^{k}(\eta)\Big)^2 \mathrm{d}\eta\\
&\leq c \gamma^2\Big(\Vert e_k\Vert^2+M_k^{-2m}\int_{\Lambda}(\partial_\eta^m \tilde{u}^k(\eta))^2(1-\eta^2)^m\mathrm{d}\eta\Big)\\
&\leq  c \gamma^2\Big(\Vert e_k\Vert^2+h_k^{2m-1}M_k^{-2m}\int_{I_k}(\partial_t^m u(t))^2 \mathrm{d}t\Big)
\\&\leq  c \gamma^2\Big(\Vert e_k\Vert^2+h_k^{2m-1}M_k^{-2m}\Vert\partial_t^m u(t) \Vert^2_{L^2(I_k)}\Big).
\end{split}
\end{equation}
In order to estimate $ B_{3,3} $, we use the same argument in \eqref{eee}. Hence
\[B_{3,3}\leq 3\big(B_{3,2}+B_{4,1}+B_{4,2}\big),\]
where
\begin{equation}
\begin{split}
B_{3,1}&\leq c\int_{\Lambda} \int_{\Lambda} (\mathcal{I}-\mathcal{I}_{M_k}^\eta)\Big(\tilde{\kappa}^k(\eta,x) {{ \tilde{\psi}^{k}}}(\eta,\tilde{u}^{k}(\eta))\Big)^2\mathrm{d}\eta\mathrm{d}x\\
&\leq cM_k^{-2m}\int_{\Lambda} \int_{\Lambda}\Big( \partial^m_{\eta}( \tilde{\kappa}^k(\eta,x) {{ \tilde{\psi}^{k}}}(\eta,\tilde{u}^{k}(\eta)))\Big)^2(1-\eta^2)^m\mathrm{d}\eta\mathrm{d}x\\
&\leq ch_k^{2m-1}h_n^{-1}M_k^{-2m}\int_{I_n}\int_{I_k}\Big( \partial^m_{s}( \kappa(s,\tau) \psi(s,u(s)))\Big)^2\mathrm{d}s\mathrm{d}\tau\\
&\leq ch_k^{2m-1}h_n^{-1}M_k^{-2m}\sum\limits_{j=0}^{m}\int_{I_k}\Big( \partial^j_{s} \psi(s,u(s))\Big)^2\mathrm{d}s\mathrm{d}\tau\\
&= ch_k^{2m-1}M_k^{-2m}\Vert \psi(.,u(.)))\Vert^2_{H^m(I_k)}.
\end{split}
\end{equation}
In addition, by  the relations \eqref{eqI}, \eqref{eqt}, Lemma \ref{lem1}, the Lipschitz condition \eqref{lip} and H\"{o}lder inequality, we get that
\begin{equation}
\begin{split}
B_{4,1}&= \int_{\Lambda}\Big[\mathcal{I}^x_{M_n} \int_{\Lambda}\mathcal{I}^\eta_{M_k}\Big(\tilde{\kappa}^k(\eta,x) \Big({{ \tilde{\psi}^{k}}}(\eta,\tilde{u}^{k}(\eta))-{{ \tilde{\psi}^{k}}}(\eta,\tilde{u}_{M_k}^{k}(\eta))\Big)\Big)\mathrm{d}\eta\Big]^2\mathrm{d}x\\
&=\sum_{i=0}^{M_n}\Big[\sum_{j=0}^{M_k}\tilde{\kappa}^k(x_{k,j},x_{n,i})\Big( {{ \tilde{\psi}^{k}}}(x_{k,j},\tilde{u}^{k}(x_{k,j}))-{{ \tilde{\psi}^{k}}}(x_{k,j},\tilde{u}_{M_k}^{k}(x_{k,j}))\Big)w_{k,j}\Big]^2w_{n,i},\\
\end{split}
\end{equation}
\begin{equation}
B_{4,2}=\int_{\Lambda}\Big[ (\mathcal{I}-\mathcal{I}_{M_n}^x)\int_{\Lambda}\mathcal{I}_{M_k}^\eta\Big(\tilde{\kappa}^k(\eta,x) {{ \tilde{\psi}^{k}}}(\eta,\tilde{u}^{k}(\eta))\Big)\mathrm{d}\eta\Big]^2\mathrm{d}x.
\end{equation}
Similar to \eqref{b33}, we conclude that
\begin{equation}
B_{4,1}\leq c \gamma^2\Big(\Vert e_k\Vert^2+h_k^{2m-1}M_k^{-2m}\Vert\partial_t^m \tilde{u}(t) \Vert^2_{L^2(I_k)}\Big).
\end{equation}
Furthermore, by Lemma \ref{lem1} and Cauchy-Schwarz inequality, we obtain
\begin{equation}
\begin{split}
B_{4,2}&\leq cM_n^{-2m}\int_{\Lambda}\Big[ \int_{\Lambda}\mathcal{I}^\eta_{M_k}\Big(\partial_x^m\Big(\tilde{\kappa}^k(\eta,x) {{ \tilde{\psi}^{k}}}(\eta,\tilde{u}^{k}(\eta))\Big)\Big)\mathrm{d}\eta\Big]^2(1-x^2)^m\mathrm{d}x\\
&\leq cM_n^{-2m}\int_{\Lambda}\Big[ \int_{\Lambda}\mathcal{I}^\eta_{M_k}\Big(\partial_x^m\Big(\tilde{\kappa}^k(\eta,x) {{ \tilde{\psi}^{k}}}(\eta,\tilde{u}^{k}(\eta))\Big)\Big]^2\mathrm{d}\eta\mathrm{d}x,
\end{split}
\end{equation}
moreover, by means of Lemma \ref{lem1} with $ m=1 $ and Cauchy-Schwarz inequality, we get
\begin{equation}\label{b42}
\begin{split}
B_{4,2}&\leq cM_n^{-2m}\int_{\Lambda}\Big[ \int_{\Lambda}\Big(\partial_x^m\tilde{\kappa}^k(\eta,x) {{ \tilde{\psi}^{k}}}(\eta,\tilde{u}^{k}(\eta))\Big)\Big]^2\mathrm{d}\eta\\
&~~~+M_k^{-2}\int_{\Lambda}\Big(\partial_\eta\Big(\partial_x^m\tilde{\kappa}^k(\eta,x) {{ \tilde{\psi}^{k}}}(\eta,\tilde{u}^{k}(\eta))\Big)\Big)^2(1-\eta^2)\mathrm{d}\eta\Big]\mathrm{d}x
\\
&\leq ch_k^{-1}h_{n}^{2m-1}M_n^{-2m}\int_{I_n} \int_{I_k}\Big(\partial_t^m\kappa(s,t) \psi(s,u(s))\Big)^2\mathrm{d}s\mathrm{d}t\\
&~~~+ch_kh_{n}^{2m-1}M_k^{-2}M_n^{-2m}\int_{I_n} \int_{I_k}\Big(\partial_s\Big(\partial_t^m\kappa(s,t) \psi(s,u(s))\Big)^2\mathrm{d}s\mathrm{d}t\\
& \leq ch_k^{-1}h_{n}^{2m}M_n^{-2m}\Vert \psi(.,u(.)) \Vert^2_{H^1(I_k)}.
\end{split}
\end{equation}
Consequently, the combinations of Eqs. \eqref{b3}-\eqref{b42} yields \eqref{eqB}. 
}}
\bibliographystyle{acm}
\bibliography{bibli}
\end{document}